\documentclass[12pt,leqno,final]{article}
\pagestyle{plain}
\usepackage{amsmath, verbatim, color}
\usepackage{mathrsfs}
\usepackage{dsfont}
\usepackage[a4paper,margin=0.91in]{geometry}
\usepackage{amsmath, amsthm, amsfonts, amssymb, color}
\usepackage{mathrsfs}
\usepackage{color}
\usepackage{stmaryrd}
\setlength{\topmargin}{0cm} \setlength{\oddsidemargin}{0cm}
\setlength{\evensidemargin}{0cm} \setlength{\textwidth}{16.5truecm}
\setlength{\textheight}{22truecm}
\makeatletter
\newcommand{\Spvek}[2][r]{%
  \gdef\@VORNE{1}
  \left(\hskip-\arraycolsep%
    \begin{array}{#1}\vekSp@lten{#2}\end{array}%
  \hskip-\arraycolsep\right)}

\def\vekSp@lten#1{\xvekSp@lten#1;vekL@stLine;}
\def\vekL@stLine{vekL@stLine}
\def\xvekSp@lten#1;{\def\temp{#1}%
  \ifx\temp\vekL@stLine
  \else
    \ifnum\@VORNE=1\gdef\@VORNE{0}
    \else\@arraycr\fi%
    #1%
    \expandafter\xvekSp@lten
  \fi}
\makeatother

\newtheorem{thm}{Theorem}[section]

\newtheorem{lem}[thm]{Lemma}
\newtheorem{prp}[thm]{Proposition}

\newtheorem{exa}[thm]{Example}

\theoremstyle{definition}

\newcommand{\scr}[1]{\mathscr #1}
\definecolor{wco}{rgb}{0.5,0.2,0.3}

\numberwithin{equation}{section} \theoremstyle{remark}

\newcommand{\ua}{\uparrow}

\title{{\bf   Well-Posedness for McKean-Vlasov SDEs with Distribution Dependent Stable Noises}
}
\author{
{\bf     Chang-Song Deng $^{a)}$, Xing Huang $^{b)}$,    }\\
\footnotesize{  a)School of Mathematics and Statistics, Wuhan University, Wuhan 430072, China}\\
\footnotesize{ dengcs@whu.edu.cn }\\
\footnotesize{  b)Center for Applied Mathematics, Tianjin University, Tianjin 300072, China}\\
\footnotesize{  xinghuang@tju.edu.cn}}
\begin{document}
\allowdisplaybreaks
\def\R{\mathbb R}  \def\ff{\frac} \def\ss{\sqrt} \def\B{\mathbf
B} \def\W{\mathbb W}
\def\N{\mathbb N} \def\kk{\kappa} \def\m{{\bf m}}
\def\ee{\varepsilon}\def\ddd{D^*}
\def\dd{\delta} \def\DD{\Delta} \def\vv{\varepsilon} \def\rr{\rho}
\def\<{\langle} \def\>{\rangle} \def\GG{\Gamma} \def\gg{\gamma}
  \def\nn{\nabla} \def\pp{\partial} \def\E{\mathbb E}
\def\d{\text{\rm{d}}} \def\bb{\beta} \def\aa{\alpha} \def\D{\scr D}
  \def\si{\sigma} \def\ess{\text{\rm{ess}}}
\def\beg{\begin} \def\beq{\begin{equation}}  \def\F{\scr F}
\def\Ric{\text{\rm{Ric}}} \def\Hess{\text{\rm{Hess}}}
\def\e{\text{\rm{e}}}
\def\iup{\text{\rm{i}}}
\def\ua{\underline a} \def\OO{\Omega}  \def\oo{\omega}
 \def\tt{\tilde} \def\Ric{\text{\rm{Ric}}}
\def\cut{\text{\rm{cut}}} \def\P{\mathbb P} \def\ifn{I_n(f^{\bigotimes n})}
\def\C{\scr C}      \def\aaa{\mathbf{r}}     \def\r{r}
\def\gap{\text{\rm{gap}}} \def\prr{\pi_{{\bf m},\varrho}}  \def\r{\mathbf r}
\def\Z{\mathbb Z} \def\vrr{\varrho} \def\ll{\lambda}
\def\L{\scr L}\def\Tt{\tt} \def\TT{\tt}\def\II{\mathbb I}
\def\i{{\rm in}}\def\Sect{{\rm Sect}}  \def\H{\mathbb H}
\def\M{\scr M}\def\Q{\mathbb Q} \def\texto{\text{o}} \def\LL{\Lambda}
\def\Rank{{\rm Rank}} \def\B{\scr B} \def\i{{\rm i}} \def\HR{\hat{\R}^d}
\def\to{\rightarrow}\def\l{\ell}\def\iint{\int}
\def\EE{\scr E}\def\Cut{{\rm Cut}}
\def\A{\scr A} \def\Lip{{\rm Lip}}
\def\BB{\scr B}\def\Ent{{\rm Ent}}\def\L{\scr L}
\def\R{\mathbb R}  \def\ff{\frac} \def\ss{\sqrt} \def\B{\mathbf
B}
\def\N{\mathbb N} \def\kk{\kappa} \def\m{{\bf m}}
\def\dd{\delta} \def\DD{\Delta} \def\vv{\varepsilon} \def\rr{\rho}
\def\<{\langle} \def\>{\rangle} \def\GG{\Gamma} \def\gg{\gamma}
  \def\nn{\nabla} \def\pp{\partial} \def\E{\mathbb E}
\def\d{\text{\rm{d}}} \def\bb{\beta} \def\aa{\alpha} \def\D{\scr D}
  \def\si{\sigma} \def\ess{\text{\rm{ess}}}
\def\beg{\begin} \def\beq{\begin{equation}}  \def\F{\scr F}
\def\Ric{\text{\rm{Ric}}} \def\Hess{\text{\rm{Hess}}}
\def\ua{\underline a} \def\OO{\Omega}  \def\oo{\omega}
 \def\tt{\tilde} \def\Ric{\text{\rm{Ric}}}
\def\cut{\text{\rm{cut}}} \def\P{\mathbb P} \def\ifn{I_n(f^{\bigotimes n})}
\def\C{\scr C}      \def\aaa{\mathbf{r}}     \def\r{r}
\def\gap{\text{\rm{gap}}} \def\prr{\pi_{{\bf m},\varrho}}  \def\r{\mathbf r}
\def\Z{\mathbb Z} \def\vrr{\varrho} \def\ll{\lambda}
\def\L{\scr L}\def\Tt{\tt} \def\TT{\tt}\def\II{\mathbb I}
\def\i{{\rm in}}\def\Sect{{\rm Sect}}  \def\H{\mathbb H}
\def\M{\scr M}\def\Q{\mathbb Q} \def\texto{\text{o}} \def\LL{\Lambda}
\def\Rank{{\rm Rank}} \def\B{\scr B} \def\i{{\rm i}} \def\HR{\hat{\R}^d}
\def\to{\rightarrow}\def\l{\ell}
\def\8{\infty}\def\I{1}\def\U{\scr U}
\maketitle

\begin{abstract}
The well-posedness is established for McKean-Vlasov SDEs driven
by $\alpha$-stable noises ($1<\alpha<2$). In this model, the drift is H\"{o}lder continuous
in space variable and Lipschitz continuous in distribution variable with respect to the sum of
Wasserstein and weighted variation distances, while the noise coefficient satisfies
the Lipschitz condition in distribution variable with respect to the sum of two Wasserstein distances.
The main tool relies on Zvonkin's transform, a time-change technique and a two-step fixed point argument.
\end{abstract} \noindent
 AMS subject Classification: 60G52, 60H10.  \\
\noindent
 Keywords: McKean-Vlasov SDEs, distribution dependent noise, $\alpha$-stable process,
 subordinator, weighted variation distance.

 \vskip 2cm

\section{Introduction}

Distribution dependent SDEs, also called McKean-Vlasov SDEs (see e.g.\ \cite{McKean}), can be used to characterize
nonlinear Fokker-Planck-Kolmogorov equations. Compared with the classical (distribution independent)
SDE, due to the dependence of coefficients (especially noise coefficients) on distribution of the solutions, the study of McKean-Vlasov SDEs
is much more difficult. One of the basic issues in the investigation of McKean-Vlasov SDEs
is the well-posedness (existence and uniqueness of solutions).

If the drift coefficients are Lipschitz continuous in distribution variable with respect to
the $L^\theta$-Wasserstein distance ($\theta\geq1$), the authors obtain in \cite{HW19} the well-posedness for
McKean-Vlasov SDEs driven by distribution dependent Brownian noises. When the drift is Lipschitz continuous
under a weighted variation distance, the well-posedness is established in \cite{RZ,W21a} only for the case that the diffusion
coefficient is distribution free. It seems that the technique (Girsanov's transform) used in \cite{RZ,W21a}
is unavailable in the distribution dependent noise case. To overcome the difficulty,
\cite{HWJMAA} adopts a parametrix method to derive the well-posedness and regularity estimates. One can refer to \cite{CF} for more
details on the parametrix method.

In recent years, McKean-Vlasov SDEs with pure jump noises have also attracted great interest. In \cite{HY}, the authors investigate the well-posedness for McKean-Vlasov SDEs with additive $\alpha$-stable noise ($1<\alpha<2$), where the drift is assumed to be $C_b^\beta$ with $\beta\in(1-\alpha/2,1)$ in space variable, and Lipschitz continuous in distribution variable with
 respect to the $L^\theta$-Wasserstein distance ($1<\theta<\alpha$). We refer the readers to \cite{JMW} for related results
 on L\'{e}vy-driven McKean-Vlasov SDEs without drift. The aim of this paper is to make some progress on the well-posedness
 for McKean-Vlasov SDEs of the following form
\begin{align}\label{E1}
\d X_t=b_t(X_t,\L_{X_t})\,\d t+\sigma_t(\L_{X_t})\,\d Z_{t},\quad t\in[0,T],
\end{align}
where $T>0$ is a fixed constant, $(Z_t)_{t \ge 0}$ is an $m$-dimensional rotationally invariant
$\alpha$-stable L\'evy process (with infinitesimal generator $-\frac12(-\triangle)^{\alpha/2}$) on a
complete filtration probability space $(\Omega,\{\scr F_t\}_{t\in[0,T]},\P)$, $\L_{X_t}$ is the law of $X_t$, and for the space
$\scr P$ of all probability measures on $\R^d$ equipped with the weak topology,
$$
    b:[0,T]\times\R^d\times\scr P\rightarrow\R^d,\quad \sigma:[0,T]\times\scr P\rightarrow\R^d\otimes\R^m
$$
are measurable.

To characterize the dependence of the coefficients on distribution variable,
we introduce some distances on $\scr P$ or its subspace.
The total variance distance $\|\cdot\|_{var}$ is given by
  $$\|\gamma-\tilde{\gamma}\|_{var} := \sup_{|f|\le 1}
  \big|\gamma(f)-\tilde{\gamma}(f)\big|,\quad\gamma,\tilde{\gamma}\in \scr P.$$
For $\kappa>0$, let
$$\scr P_\kappa=\big\{\gg\in \scr P\,;\, \gg(|\cdot|^\kappa)<\infty\big\}.$$
Recall the $L^\kappa$-Wasserstein distance $\W_\kappa$
  $$\W_\kappa(\gamma,\tilde{\gamma}):= \inf_{\pi\in \C(\gamma,\tilde{\gamma})} \left(\int_{\R^d\times\R^d} |x-y|^\kappa
  \,\pi(\d x,\d y)\right)^{1/(1\vee \kappa)
  },\quad \gamma,\tilde{\gamma}\in \scr P_\kappa,$$
  where $\C(\gamma,\tilde{\gamma})$ is the set of all couplings of $\gamma$ and $\tilde{\gamma}$.
  For $\kappa>0$, $\scr P_\kappa$ is a complete metric space under the weighted variation distance
  $$\|\gamma-\tilde{\gamma}\|_{\kappa,var} := \sup_{|f|\le 1+|\cdot|^\kappa}
  \big|\gamma(f)-\tilde{\gamma}(f)\big|,\quad\gamma,\tilde{\gamma}\in \scr P_\kappa.$$
  It is clear that $\|\gamma-\tilde{\gamma}\|_{var}\leq\|\gamma-\tilde{\gamma}\|_{\kappa,var}$ for $\kappa>0$ and
  $\gamma,\tilde{\gamma}\in \scr P_\kappa$. If $\kappa\in(0,1]$,
  the following adjoint formula (see e.g.\ \cite[Theorem 5.10]{Chen04}) holds for
  $\gamma,\tilde{\gamma}\in \scr P_\kappa$
$$\W_\kappa(\gamma,\tilde{\gamma})=\sup_{[f]_\kappa\leq 1}|\gamma(f)-\tilde{\gamma}(f)|
=\sup_{[f]_\kappa\leq 1,f(0)=0}|\gamma(f)-\tilde{\gamma}(f)|,$$
where $[f]_\kappa$ denotes the H\"{o}lder seminorm (of exponent $\kappa$) of $f:\R^d\rightarrow\R$ defined by $[f]_\kappa:=\sup_{x\neq y}\frac{|f(x)-f(y)|}{|x-y|^\kappa}$. It is easy to see that for $\gamma,\tilde{\gamma}\in \scr P_\kappa$,
$$
    \W_\kappa(\gamma,\tilde{\gamma})\leq \|\gamma-\tilde{\gamma}\|_{\kappa,var},\quad \kappa\in(0,1].
$$

  To derive the well-posedness for \eqref{E1}, we make the following assumptions.
\beg{enumerate}
\item[$(A1)$] $\alpha\in(1,2)$.
\item[$(A2)$] There exist   $\beta\in(0,1)$ satisfying $2\beta+\alpha>2$, $K_1>0$ and $k\in[1,\alpha)$ such that
$$
|b_t(x,\gamma)-b_t(y,\tilde{\gamma})|\leq K_1(\|\gamma-\tilde{\gamma}\|_{k,var}+\W_{k}(\gamma,\tilde{\gamma})+|x-y|^\beta)
$$
for all $t\in[0,T]$, $x,y\in\R^d$ and $\gamma,\tilde{\gamma}\in\scr P_k$. Moreover, 
$$\|b\|_\infty:=\sup_{t\in[0,T], x\in\R^d,\gamma\in\scr P_k}|b_t(x,\gamma)|<\infty.$$
\item[$(A3)$] There exist constants $K_2\geq1$ and $\eta\in(0,1)$ such that
$$K_2^{-1}I\leq (\sigma_t\sigma^\ast_t)(\gamma)\le K_2I,\quad\gamma\in\scr P_k $$
and
$$\|\sigma_t(\gamma)-\sigma_t(\tilde{\gamma})\|\leq K_2\big(\W_\eta(\gamma,\tilde{\gamma})+\W_k(\gamma,\tilde{\gamma})
\big),\quad
t\in[0,T],\,\gamma,\tilde{\gamma}\in\scr P_k,$$
where $k\in[1,\alpha)$ is the constant appearing in $(A2)$.
\end{enumerate}

Inspired by \cite[Example 3]{ZG}, we first present a counterexample to point out that we cannot expect the uniqueness for
the solution to \eqref{E1} if
the noise coefficient $\sigma$ is only assumed to be Lipschitz continuous
in distribution variable with respect to the total variance distance. This means that,
to guarantee the well-posedness, it is impossible to replace the Wasserstein distance $\W_\eta$
by the weighted variation distance $\|\cdot\|_{k,var}$ in $(A3)$.

\begin{exa}
    Let $d=m=1$ and $Z_t$ be an $\alpha$-stable process on $\R$ \textup{(}$1<\alpha<2$\textup{)}.
    By the asymptotic formula for the heat kernel of stable
    processes \textup{(}cf.\ \cite[Corollary 2.1\,a)]{DS19}\textup{)},
    it is not hard to verify that
    $$
        \lim_{x\rightarrow\infty}\frac{\P(Z_1\geq2x)}{\P(x\leq Z_1<2x)}=\frac{1}{2^\alpha-1}<1.
    $$
    Then we can pick large enough $M>1$ such that $\P(Z_1\geq 2M)<\P(M\leq Z_1<2M)$. Set
    \begin{align*}
        a&:=\frac{\P(M\leq Z_1<2M)-\P(Z_1\geq 2M)}{\P(M\leq Z_1<2M)}\,\in\,(0,1),\\
        b&:=\frac{1}{\P(M\leq Z_1<2M)}\,\in\,(1,\infty).
    \end{align*}
    Let
    $$
        \sigma_t(\gamma):=a+b\gamma\left([2Mt^{1/\alpha},\infty)\right),
        \quad t\in[0,T],\,\gamma\in\scr P.
    $$
    It is easy to see that for all $t\in[0,T]$ and $\gamma,\tilde{\gamma}\in\scr P$,
    $$
        0<a\leq \sigma_t(\gamma)\leq a+b,\quad\text{and}\quad
        |\sigma_t(\gamma)-\sigma_t(\tilde{\gamma})|\leq b\|\gamma-\tilde{\gamma}\|_{var}.
    $$
    Consider the  McKean-Vlasov SDE \textup{(}without drift\textup{)} on $\R$:
    \begin{equation}\label{e22hs}
        \d X_t=\sigma_t(\L_{X_t})\,\d Z_{t},\quad t\in[0,T].
    \end{equation}
    Since the distribution of $Z_t$ coincides with that of $t^{1/\alpha}Z_1$, we have
    \begin{align*}
        \sigma_t(\L_{Z_t})&=a+b\P(Z_t\geq 2Mt^{1/\alpha})=a+b\P(Z_1\geq2M)=1,\\
        \sigma_t(\L_{2Z_t})&=a+b\P(2Z_t\geq 2Mt^{1/\alpha})=a+b\P(Z_1\geq M)=2.
    \end{align*}
    This implies that the SDE \eqref{e22hs} with initial value $X_0=0$ has at least two strong
    solutions: $Z_t$ and $2Z_t$.
\end{exa}

Denote by $C([0,T];\scr P_k)$ the set of all
continuous maps from $[0,T]$ to $\scr P_k$ under the metric $\W_k$.
Throughout the paper the constant $C$
denotes positive constant which may depend on $T,d,m,\alpha,\beta,k,K_1,K_2,\|b\|_\infty$; its value may change,
without further notice, from line to line.

Our main result is the following theorem:
\begin{thm}\label{EUS} Assume $(A1)$-$(A3)$.
Then \eqref{E1} is well-posed in $\scr P_{k}$, and the solution satisfies $\L_{X_\cdot}\in C([0,T];\scr P_k)$ and
$$
    \E\left[\sup_{t\in[0,T]}|X_t|^k\right]<C\left(
    1+\E\big[|X_0|^k\big]\right).
$$
\end{thm}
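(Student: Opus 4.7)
\medskip
\noindent\textbf{Proof proposal.} The overall strategy is a nested two-step fixed point argument. The outer fixed point is run on the distribution entering the noise coefficient, while the inner fixed point is run on the distribution entering the drift; this split is natural because Zvonkin's transform and a time-change tackle the H\"older drift and the time-inhomogeneity of the diffusion cleanly only when the measure in $\sigma$ is treated as known.

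First I would fix $\nu\in C([0,T];\scr P_k)$ lying in a closed ball of the form $\{\nu\,:\,\sup_{t\in[0,T]}\nu_t(|\cdot|^k)\le N\}$ with $N$ chosen from the a priori bound in the statement. Freezing $\nu$ inside $\sigma$ turns \eqref{E1} into the McKean-Vlasov SDE
\begin{align*}
\d X^\nu_t=b_t(X^\nu_t,\L_{X^\nu_t})\,\d t+\sigma_t(\nu_t)\,\d Z_t,
\end{align*}
whose noise coefficient is now a deterministic, uniformly elliptic, time-dependent matrix. I would then run an inner fixed point: for each $\mu\in C([0,T];\scr P_k)$ in the same ball, solve the decoupled classical SDE
\begin{align*}
\d Y^{\mu,\nu}_t=b_t(Y^{\mu,\nu}_t,\mu_t)\,\d t+\sigma_t(\nu_t)\,\d Z_t,\qquad Y^{\mu,\nu}_0=X_0,
\end{align*}
which by $(A2)$ has bounded drift that is $\beta$-H\"older in space. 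A time-change based on $\sigma_t(\nu_t)$ reduces the martingale part to (a linear image of) a standard rotationally invariant $\alpha$-stable process, and in that chart the condition $2\beta+\alpha>2$ is precisely what is needed for the associated Kolmogorov equation to admit a $C^1$ correction, yielding Zvonkin's transform and hence strong well-posedness of $Y^{\mu,\nu}$. Defining $\Phi^\nu(\mu):=\L_{Y^{\mu,\nu}_\cdot}$, the Lipschitz continuity of $b$ in the distribution variable with respect to $\|\cdot\|_{k,var}+\W_k$ (together with the uniform ellipticity providing heat-kernel type smoothing) gives, after taking $T$ small or inserting an exponential weight in time, that $\Phi^\nu$ is a contraction on the ball under a metric of the form $\sup_{t\le T}\e^{-\lambda t}(\|\mu_t-\tilde\mu_t\|_{k,var}+\W_k(\mu_t,\tilde\mu_t))$. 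The unique fixed point is $\Psi(\nu):=\L_{X^\nu_\cdot}$.

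Second, I would close the outer fixed point by proving $\Psi$ is a contraction on the same ball. For $\nu,\tilde\nu$ the corresponding solutions $X^\nu,X^{\tilde\nu}$ differ through (i) the drift, controlled by the H\"older-in-space bound in $(A2)$ combined with Zvonkin's $C^1$ correction so that the H\"older drift difference becomes a true Lipschitz perturbation, (ii) the distribution variable of the drift, giving a $\|\cdot\|_{k,var}+\W_k$ Lipschitz term that is absorbed by a Gronwall-type argument, and (iii) the diffusion, which by $(A3)$ produces a stochastic integral with integrand of size $\W_\eta(\nu_t,\tilde\nu_t)+\W_k(\nu_t,\tilde\nu_t)$; since $k<\alpha$, an $L^k$ Burkholder-type estimate for stable integrals converts this into a usable bound. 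Choosing the time horizon small, or equivalently $\lambda$ large in the exponentially weighted metric, produces the contraction and hence a unique fixed point $\nu=\Psi(\nu)$, which is exactly the law flow of a solution to \eqref{E1}. Existence and uniqueness in the sense required by well-posedness in $\scr P_k$ follow by patching finite intervals.

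The a priori moment bound is handled first and used throughout: since $\|b\|_\infty<\infty$ by $(A2)$ and $\|\sigma\|\le K_2^{1/2}$ by $(A3)$, one has
\begin{align*}
\E\bigg[\sup_{t\in[0,T]}|X_t|^k\bigg]\le C\bigg(1+\E[|X_0|^k]+\E\Big[\sup_{t\in[0,T]}\Big|\int_0^t\sigma_s(\L_{X_s})\,\d Z_s\Big|^k\Big]\bigg),
\end{align*}
and the last term is finite because $k<\alpha$ and $\sigma$ is bounded, via standard stable-integral moment estimates; continuity of $\L_{X_\cdot}$ in $\W_k$ then follows from continuity of $X_\cdot$ in $L^k$ by dominated convergence. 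The main obstacle I foresee is in the outer contraction step: one must track how the Zvonkin corrector, which is built from the frozen coefficients $(b_\cdot(\cdot,\mu_\cdot^\nu),\sigma_\cdot(\nu_\cdot))$, depends on $\nu$ when one compares $X^\nu$ and $X^{\tilde\nu}$. The asymmetry between the distances in $(A2)$ and $(A3)$ — weighted variation plus $\W_k$ for $b$, but $\W_\eta+\W_k$ for $\sigma$ — means the PDE/Zvonkin estimates must be formulated so that the diffusion perturbation enters only through $\W_\eta+\W_k$ (for which one has appropriate $\alpha$-stable heat-kernel gradient bounds), while the drift perturbation absorbs the $\|\cdot\|_{k,var}$ contribution; reconciling these distances in a single Gronwall inequality is where the bulk of the technical work will lie.
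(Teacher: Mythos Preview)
Your two-level fixed point idea is right, but you have the nesting \emph{reversed} relative to the paper, and this reversal is not cosmetic. The paper freezes the \emph{drift} measure $\mu$ first and runs the inner fixed point on the \emph{noise} measure $\nu$ (Proposition~\ref{PW}), then runs the outer fixed point on $\mu$; you freeze the noise measure $\nu$ first and run the inner fixed point on the drift measure $\mu$. The reason the paper's order is the natural one is precisely the distance asymmetry you flag at the end: the heat-kernel comparison in $\|\cdot\|_{k,var}$ with respect to a perturbation of $\nu$ (the $\mathsf J_1$ term in Lemma~\ref{vdh22s}) carries a constant $C\gamma(1+|\cdot|^k)$ with \emph{no} small-in-$\delta$ factor, whereas the same comparison in $\W_\eta$ does acquire a small factor via the subordinator estimate of Lemma~\ref{slimit}\,i). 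Consequently the inner $\nu$-contraction closes cleanly in $\W_\eta+\W_k$, and then the outer $\mu$-step only sees drift perturbations, which all come with $\delta^{1/\alpha-1}$. With your order, the outer $\nu$-step must compare $X^\nu$ and $X^{\tilde\nu}$, whose drifts differ through $\|\L_{X^\nu}-\L_{X^{\tilde\nu}}\|_{k,var}$; this cannot be ``absorbed by a Gronwall-type argument'' in the $\W_\eta+\W_k$ metric you need for $\sigma$, and you would have to first prove a separate (non-contracting) bound $\|\L_{X^\nu}-\L_{X^{\tilde\nu}}\|_{k,var}\le C(\gamma)\,[\W_\eta+\W_k](\nu,\tilde\nu)$ and substitute it back. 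That bootstrap is doable with the same lemmas, but it is exactly the extra work the paper's nesting avoids.

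Two smaller points. First, your ``time-change based on $\sigma_t(\nu_t)$'' does not do what you claim: for a time-dependent matrix coefficient, $\int_0^t\sigma_s(\nu_s)\,\d Z_s$ is not a time-changed rotationally invariant stable process. The paper's ``time-change'' is the subordination $Z_t=W_{S_t}$ with an $\alpha/2$-stable subordinator $S$; conditionally on $S$ the density $q_{s,t}^{\nu,S}$ is Gaussian, which is what yields the gradient and difference estimates in Lemmas~\ref{L20}--\ref{L2} and ultimately the crucial small factors via Lemma~\ref{slimit}. Second, there is no need to work on a bounded ball in $C([0,T];\scr P_k)$: the initial law $\gamma$ is fixed, so $\gamma(1+|\cdot|^k)$ is a fixed constant and the contractions hold on the whole space once $\delta$ is large enough. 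Your moment bound argument and the Zvonkin step for the decoupled SDE (via Priola's PDE result under $2\beta+\alpha>2$) match the paper.
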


The remainder of the paper is organized as follows: In Section 2, we use a fixed point argument to establish the well-posedness for McKean-Vlasov SDEs with distribution free drifts, which will be used in the proof of Theorem \ref{EUS}. The proof of our main result is
presented in Section 3. Finally,
we give in the Appendix two limits (concerning stable subordinators),
which have been used in Section 2 and might be interesting on their own.

\section{Well-posedness for SDEs with distribution free drifts}

This section is devoted to the well-posedness for a particular class of McKean-Vlasov SDEs, where the drift
coefficient does not depend on distribution variable.
For  $\mu\in C([0,T];\scr P_k)$  and $\gg\in \scr P_k$, consider the following McKean-Vlasov SDE  with initial distribution  $\L_{X_{0}^{\gg,\mu}}=\gg$:
 \beq\label{ED}
     \d X_{t}^{\gg,\mu}= b_t(X_{t}^{\gg,\mu}, \mu_t)\,\d t+\sigma_t(\L_{X_{t}^{\gg,\mu}})\,\d Z_{t},\quad t\in [0,T].
 \end{equation}

\begin{prp}\label{PW}
Assume $(A1)$-$(A3)$.
 For any $\mu\in C([0,T]; \scr P_k)$ and $\gg\in \scr P_k$,  $\eqref{ED}$ is well-posed in $\scr P_k$, and  the unique solution satisfies $\L_{X_{\cdot}^{\gg,\mu}} \in C([0,T]; \scr P_k)$. Furthermore, for any $\mu^1,\mu^2\in C([0,T];\scr P_k)$, $\gg\in \scr P_k$ and
 $\delta>0$ large enough,
\begin{align*}
    \sup_{t\in[0,T] }\e^{-\delta t}&\big[\W_\eta(\L_{X_{t}^{\gg,\mu^1}},\L_{X_{t}^{\gg,\mu^2}})+ \W_k(\L_{X_{t}^{\gg,\mu^1}},\L_{X_{t}^{\gg,\mu^2}})\big]\\
    &\le C\gamma(1+|\cdot|^k)[\delta^{1/\alpha-1}+\delta^{-1}]
    \sup_{t\in[0,T]}\e^{-\delta t}\left[
    \|\mu^1_t-\mu^2_t\|_{k,var}+\W_{k}(\mu^1_t,\mu^2_t)\right].
\end{align*}
\end{prp}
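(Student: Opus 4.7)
The plan is a fixed-point argument that decouples the law entering $\si$ from the solution itself. For each $\nu\in C([0,T];\scr P_k)$ freeze $\nu$ in the noise coefficient and solve the classical (distribution-free) SDE
\[
\d Y_t^\nu=b_t(Y_t^\nu,\mu_t)\,\d t+\si_t(\nu_t)\,\d Z_t,\quad\L_{Y_0^\nu}=\gg.
\]
Here $\si_t(\nu_t)$ is a bounded, uniformly non-degenerate, deterministic matrix-valued function of $t$ (by $(A3)$), and $b_t(\cdot,\mu_t)$ is bounded and $\bb$-H\"older in space with $\bb>1-\aa/2$ (the condition $2\bb+\aa>2$ of $(A2)$). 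This is precisely the setting of the Priola / Chen--Song--Zhang theory of strong well-posedness for $\aa$-stable SDEs with H\"older drift, so $Y^\nu$ exists uniquely in the strong sense. Standard moment estimates based on $\|b\|_\infty<\8$ and $\E|Z_t|^k<\8$ (since $k<\aa$) give $\L_{Y_\cdot^\nu}\in C([0,T];\scr P_k)$ with the growth bound $\E|Y_t^\nu|^k\leq C(1+\gg(|\cdot|^k))$. Hence the map $\Phi\colon\nu\mapsto\L_{Y_\cdot^\nu}$ is a self-map of $C([0,T];\scr P_k)$ whose fixed points are exactly the solutions of \eqref{ED}.

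\textbf{Contraction via Zvonkin's transform.} To extract uniqueness and a quantitative comparison, I first kill the H\"older drift by Zvonkin's transform: for each $\nu$ let $u^\nu$ solve the backward fractional PDE
\[
\pp_s u_s^\nu+L_s^\nu u_s^\nu+b_s(\cdot,\mu_s)\cdot\nn u_s^\nu+b_s(\cdot,\mu_s)=\ll u_s^\nu,\qquad u_T^\nu=0,
\]
where $L_s^\nu$ is the generator of $\si_s(\nu_s)\,\d Z_s$. A time-change renormalizing the stable integral to a standard rotationally invariant $\aa$-stable driver conjugates $L_s^\nu$ to a fixed fractional Laplacian up to a smooth linear change of variables; fractional Schauder estimates then give, uniformly in $\nu$ and for $\ll$ large, the bounds $\|u^\nu\|_\infty+\|\nn u^\nu\|_\infty\leq 1/2$ and $[\nn u^\nu]_\bb\leq C$. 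Applying It\^o's formula to $\Psi_s^\nu(x):=x+u_s^\nu(x)$ removes the H\"older drift and leaves an SDE whose drift is bounded Lipschitz in $x$ with noise coefficient $(I+\nn u_s^\nu(Y_s^\nu))\si_s(\nu_s)$. Comparing two solutions $Y^{\nu^1},Y^{\nu^2}$ driven by the same $Z$ and the same initial datum, the transformed drift difference is controlled by $|Y_s^{\nu^1}-Y_s^{\nu^2}|$ plus the PDE stability $\|u^{\nu^1}-u^{\nu^2}\|_{C^1}$, which, by the Schauder estimate applied to the equation satisfied by $u^{\nu^1}-u^{\nu^2}$, is bounded through $(A3)$ by $\W_\eta(\nu_s^1,\nu_s^2)+\W_k(\nu_s^1,\nu_s^2)$. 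The stochastic-integral difference is handled by the BDG inequality for stable integrals (valid since $k<\aa$), producing a factor of the form $\bigl(\int_0^t|\si_s(\nu_s^1)-\si_s(\nu_s^2)|^\aa\,\d s\bigr)^{k/\aa}$. Inserting the weight $\e^{-\dd t}$ and taking $k$-th roots yields a $\dd^{-1}$ contribution from the Lipschitz drift integral and a $\dd^{1/\aa-1}$ contribution from the stable integral, so for $\dd$ large the map $\Phi$ is a contraction on $C([0,T];\scr P_k)$ in the $\dd$-weighted metric, yielding the unique fixed point and hence well-posedness of \eqref{ED}.

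\textbf{Stability in $\mu$ and main obstacle.} The displayed inequality then follows by applying the same comparison to $X^{\gg,\mu^1},X^{\gg,\mu^2}$ driven by a common $Z$: the only new ingredient is the extra drift difference $|b_s(y,\mu_s^1)-b_s(y,\mu_s^2)|\leq K_1(\|\mu_s^1-\mu_s^2\|_{k,var}+\W_k(\mu_s^1,\mu_s^2))$ coming from $(A2)$, which threads through the Gronwall bookkeeping to reproduce the right-hand side; the prefactor $\gg(1+|\cdot|^k)$ appears when converting $L^k$-coupling bounds into weighted variation differences via Kantorovich-type duality together with the $k$-th moment estimate from the first step. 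The main obstacle is the uniform-in-$\nu$ analysis of the Zvonkin PDE: because $\si_s(\nu_s)\si_s^*(\nu_s)$ is an arbitrary uniformly elliptic matrix, $L_s^\nu$ is \emph{not} the standard fractional Laplacian, so classical Schauder theory does not apply off the shelf. The time-change technique (together with the stable-subordinator limits computed in the Appendix) is the device that reduces the problem to a fixed generator, after which the uniform Schauder bounds and the delicate weight-tracking that matches the stated $\dd^{1/\aa-1}+\dd^{-1}$ decay can be carried out.
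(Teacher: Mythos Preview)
Your proposal has a genuine gap at the contraction step. Since $(A3)$ makes $\si$ Lipschitz in the \emph{sum} $\W_\eta+\W_k$, the fixed-point map $\Phi:\nu\mapsto\L_{Y^\nu_\cdot}$ must be contractive in the weighted metric $\sup_t\e^{-\dd t}[\W_\eta(\nu^1_t,\nu^2_t)+\W_k(\nu^1_t,\nu^2_t)]$. Your Zvonkin/BDG argument, carried out exactly as in the paper's Lemma~\ref{PW0}, yields only $\sup_t\e^{-\dd t}\W_k(\Phi(\nu^1)_t,\Phi(\nu^2)_t)\le C\dd^{-1/2}\sup_t\e^{-\dd t}[\W_\eta+\W_k]$, which settles the $\W_k$ half. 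It does \emph{not} produce a linear bound on $\W_\eta(\Phi(\nu^1)_t,\Phi(\nu^2)_t)$: from the synchronous coupling one only gets $\W_\eta\le(\E|Y^{\nu^1}_t-Y^{\nu^2}_t|^k)^{\eta/k}$, a sublinear power that cannot be absorbed into a linear contraction (indeed $r\mapsto r^{\eta/k}$ has infinite slope at $0$), and rerunning the It\^o/Gronwall bookkeeping directly at exponent $\eta<1$ fails for lack of convexity. So Zvonkin alone does not close the loop in $\nu$, and without that the well-posedness of \eqref{ED} is not established.

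In the paper this is exactly the crux, and it is handled by a separate mechanism that your sketch omits. The $\W_\eta$ estimate (second inequality of Lemma~\ref{vdh22s}) comes from the Duhamel expansion $P^{\mu,\nu}_{s,t}f=Q^\nu_{s,t}f+\int_s^t P^{\mu,\nu}_{s,r}\<b_r,\nn Q^\nu_{r,t}f\>\,\d r$, where $Q^\nu$ is the semigroup of the driftless process $x+\int\si_r(\nu_r)\,\d Z_r$. The subordination $Z_t=W_{S_t}$ provides an explicit conditional-Gaussian density for $Q^\nu$, and the resulting kernel and gradient estimates (Lemmas~\ref{L20}--\ref{L2}) together with the stable-subordinator limits of Lemma~\ref{slimit} are precisely what force the coefficient in front of $[\W_\eta+\W_k]$ down to $\tfrac14$ for large $\dd$. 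That is what the time-change is actually used for; it is \emph{not} a device to conjugate the Zvonkin generator $L^\nu_s$ to a fixed fractional Laplacian---Priola's Schauder theory in \cite{P} already treats the $\si$-dependent operator directly, uniformly in $\nu$. Likewise, the prefactor $\gg(1+|\cdot|^k)$ in the displayed estimate does not arise by ``converting $L^k$-coupling bounds into weighted variation differences'' (the target inequality is in $\W_\eta+\W_k$, not in $\|\cdot\|_{k,var}$); it enters when the Duhamel terms $\mathsf J_1,\dots,\mathsf J_4$ are integrated against $1+|y|^k$ and one invokes the moment bound $\E[1+|X^{x,\mu,\nu}_t|^k]\le C(1+|x|^k)$.
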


\subsection{A useful estimate for classical SDEs}

Let $\gamma\in\scr P_k$, $X_{0}^\gg$ be $\F_0$-measurable with $\L_{X_{0}^\gg}=\gg$, and
  $\mu,\nu\in C([0,T];\scr P_k)$. Consider the following (distribution independent) SDE
\beq\label{ED00}
    \d X_{s,t}^{\gg,\mu,\nu}= b_t(X_{s,t}^{\gg,\mu,\nu},\mu_t)\,\d t+\si_t(\nu_t)\,\d Z_{t},
    \quad   0\leq s\leq t\leq T
\end{equation}
with $X_{s,s}^{\gg,\mu,\nu}=X_{0}^\gg$. According to \cite{P}, \eqref{ED00} is well-posed
under the assumptions $(A2)$ and $(A3)$.
For simplicity, we denote $X_{t}^{\gg,\mu,\nu}=X_{0,t}^{\gg,\mu,\nu}$.
Moreover, if $\gg=\delta_x$ is the Dirac measure concentrated at $x\in\R^d$, we write
$X_{s,t}^{x,\mu,\nu}=X_{s,t}^{\delta_x,\mu,\nu}$ and $X_{t}^{x,\mu,\nu}=X_{0,t}^{\delta_x,\mu,\nu}$
for $0\leq s\leq t\leq T$.

\begin{lem}\label{PW0}
Assume $(A2)$ and $(A3)$. Let $\gamma\in\scr P_k$ and $\mu^i,\nu^i\in C([0,T];\scr P_k)$, $i=1,2$. Then for any $\delta>0$,
        \begin{align*}
              \sup_{t\in[0,T] }\e^{-\delta t}\W_k(\L_{X_{t}^{\gg,\mu^1,\nu^1}},\L_{X_{t}^{\gg,\mu^2,\nu^2}})
              &\leq \frac{C}{\delta}\sup_{t\in[0,T]}\e^{-\delta t}[\|\mu^1_t-\mu^2_t\|_{k,var}+\W_{k}(\mu^1_t,\mu^2_t)]\\
              &\quad+\frac{C}{\sqrt{\delta}}\sup_{t\in[0,T]}\e^{-\delta t}[\W_\eta(\nu^1_t,\nu^2_t)+\W_k(\nu^1_t,\nu^2_t)].
         \end{align*}
\end{lem}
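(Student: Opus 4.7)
My plan is to apply a \emph{single} Zvonkin transform (the one tied to $(\mu^1,\nu^1)$) to both SDEs so as to avoid any $\nabla(u^1-u^2)$ contribution in the noise integral, and then to bound the stable stochastic integral via the subordination representation of $Z$. This should yield the asymmetric $\delta$-scalings $\delta^{-1}$ on the drift side and $\delta^{-1/2}$ on the noise side.

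For each $i=1,2$ I would let $u^i_t(x)$ solve the backward non-local Zvonkin equation with source $b_t(\cdot,\mu^i_t)$ and principal part equal to the generator of $\si_t(\nu^i_t)\d Z_t$. Under $(A1)$-$(A3)$ and the critical condition $2\beta+\alpha>2$, Schauder-type estimates for non-local operators produce a unique classical solution $u^i\in C_b^{1+\theta}$ for some $\theta\in(0,1)$ with $\theta+\alpha>1$, and by splitting $[0,T]$ into short pieces one may arrange $\|\nn u^i\|_\infty\le 1/2$, so that $\Theta_t^i(x):=x+u_t^i(x)$ is a bi-Lipschitz diffeomorphism uniformly in $t$. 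Linear stability of the PDE in its coefficients then yields
$$\|u^1-u^2\|_{C_b^{1+\theta}}\le C\sup_{s\in[0,T]}\bigl[\|\mu^1_s-\mu^2_s\|_{k,var}+\W_k(\mu^1_s,\mu^2_s)+\W_\eta(\nu^1_s,\nu^2_s)+\W_k(\nu^1_s,\nu^2_s)\bigr].$$

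Next, I would apply the stable-process It\^o formula with the \emph{single} transform $\Theta^1$ to both $X^i:=X^{\gg,\mu^i,\nu^i}$. For $Y^1:=\Theta^1(X^1)$ the PDE cancels the drift exactly; for $Y^2:=\Theta^1(X^2)$ the cancellation leaves a residual drift
$$R_s(X^2_s):=b_s(X^2_s,\mu^2_s)-b_s(X^2_s,\mu^1_s)+\bigl(L_s^{\nu^2}-L_s^{\nu^1}\bigr)u^1_s(X^2_s),$$
where $L_s^\nu$ is the non-local generator of $\si_s(\nu)\d Z_s$; by $(A2)$-$(A3)$ this residual is pointwise dominated by the distributional-difference quantity on the right of the PDE stability estimate above. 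The difference $Y^1_t-Y^2_t$ therefore splits into (i) a $\d s$-integral containing only such distributional differences plus an absorbable H\"older self-term $|X^1_s-X^2_s|^\theta$, and (ii) a pure $\d Z_s$-integral with integrand $(I+\nn u^1_s)[\si_s(\nu^1_s)-\si_s(\nu^2_s)]$, whose norm is controlled by $C[\W_\eta(\nu^1_s,\nu^2_s)+\W_k(\nu^1_s,\nu^2_s)]$ via $(A3)$. Crucially, because the \emph{same} $\Theta^1$ is used for both processes, no $\nabla(u^1-u^2)$ term ever enters (ii), so all $\mu$-differences are confined to (i); inserting $|g_s|\le A_\mu\,\e^{\delta s}$ and using $\int_0^t\e^{\delta s}\d s\le\delta^{-1}\e^{\delta t}$ trivially delivers the factor $\delta^{-1}$.

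To bound the $k$-th moment of the stable stochastic integral (ii), I would exploit the subordination $Z_t=B_{S_t}$ with $S$ an $\alpha/2$-stable subordinator independent of the Brownian motion $B$. Conditioning on $S$ turns the stable integral into a Brownian stochastic integral with random clock, so Burkholder-Davis-Gundy for $B$ gives
$$\E\bigl[\bigl|\textstyle\int_0^t f_s\,\d Z_s\bigr|^k\,\big|\,S\bigr]\le C\bigl(\textstyle\int_0^t|f_s|^2\,\d S_s\bigr)^{k/2}.$$
The unconditional $k$-th moment of the right-hand side is controlled, via the two stable-subordinator limits relegated to the Appendix, by a constant multiple of $(\int_0^t|f_s|^2\,\d s)^{k/2}$; substituting $|f_s|\le A_\nu\,\e^{\delta s}$ and using $\int_0^t\e^{2\delta s}\d s\le\delta^{-1}\e^{2\delta t}$ yields, after taking the square root, the advertised factor $\delta^{-1/2}$. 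A Gr\"onwall step for $\delta$ sufficiently large then absorbs the H\"older self-term $|X^1_s-X^2_s|^\theta$ against the exponential weight. The main obstacle is the PDE step: establishing non-local Schauder-type stability for the Zvonkin equation explicitly in the weighted variation distance $\|\cdot\|_{k,var}$, which is sharper than the $\W_k$-only dependence one usually writes and is what permits the clean $\delta^{-1}$ versus $\delta^{-1/2}$ separation between the drift and noise scales.
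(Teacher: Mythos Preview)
Your overall strategy---apply a \emph{single} Zvonkin transform $\Theta^1:=\mathrm{id}+u^{\lambda,\mu^1,\nu^1}$ to \emph{both} processes so that all $\mu$-differences land in a $\d s$-integral---is exactly the paper's approach, and the mechanism producing $\delta^{-1}$ versus $\delta^{-1/2}$ is the right one. The PDE stability estimate for $u^1-u^2$ that you state is in fact never needed (only $u^1$ is used), and the self-term left in the $\d s$-part is Lipschitz (via $\|\nn u^1\|_\infty\le\tfrac12$), not H\"older; a genuine H\"older self-term would not close under Gr\"onwall.

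The real gap is in your martingale step. For the pure-jump It\^o formula the martingale part of $\d\Theta^1_t(X^i_t)$ is
\[
\int_{\R^m\setminus\{0\}}\big[\Theta^1_t(X^i_{t-}+\si_t(\nu^i_t)x)-\Theta^1_t(X^i_{t-})\big]\,\tilde N(\d x,\d t),
\]
\emph{not} $(I+\nn u^1_t)\si_t(\nu^i_t)\,\d Z_t$; the latter is the continuous-semimartingale form and is simply false here. Consequently the difference of the two martingale integrands is not the deterministic product you wrote: it contains an $|X^1_{t-}-X^2_{t-}|$ self-term (the paper's $\Lambda_1,\Lambda_2$) and depends nonlinearly on $x$ through $\Theta^1$, so it cannot even be written as $f_s\,\d Z_s$. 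This breaks your subordination-plus-conditioning plan twice over: the correct integrand is $\F^Z$-adapted, so conditioning on $S$ does not decouple it from $W$; and the Appendix limits (Lemma~\ref{slimit}) concern quantities like $\E[S_t^{\kappa-1}\int_0^t\e^{\delta r}\d S_r]$ arising from heat-kernel bounds in \S2.2--2.3, not moments of stable stochastic integrals. The paper instead applies the Burkholder--Davis--Gundy inequality for jump martingales directly, splitting $\{|x|>1\}$ and $\{|x|\le1\}$ and using the $C_b^{\alpha+\beta}$ regularity of $u^1$ on small jumps to obtain the $(\int_0^t[\W_\eta+\W_k]^2\,\d r)^{k/2}$ term that yields $\delta^{-1/2}$, together with a linear self-term absorbed by Gr\"onwall.
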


\begin{proof} For  $\ll>0$ and $\mu,\nu\in C([0,T];\scr P_k)$, consider the following PDE  for
$u^{\ll,\mu,\nu}:[0,T]\times\R^d\to\R^d$:
\begin{equation}\label{A10}
\partial_tu^{\ll,\mu,\nu}_t(\cdot)+\A_t^{\nu} u^{\ll,\mu,\nu}_t(\cdot)+\nabla u^{\ll,\mu,\nu}_t(\cdot)
b_t(\cdot,\mu_t)+
b_t(\cdot,\mu_t)=\ll u^{\ll,\mu,\nu}_t(\cdot),\quad u^{\ll,\mu,\nu}_T(\cdot)=0,
\end{equation}
where
\beg{equation}\label{L34}
\A^{\nu}_t f(\cdot):=\int_{\mathbb{R}^{m}\backslash\{0\}}\big[f(\cdot+\sigma_t(\nu_t)y)-f(\cdot)
-\langle \sigma_t(\nu_t)y,\nabla f(\cdot)\rangle\mathds{1}_{\{ |y|\leq 1\}} \big]\,\Pi(\d y),
\end{equation}
and
$$
    \Pi(\d y):=\frac{\alpha\Gamma(\frac{m+\alpha}{2})}
    {2^{2-\alpha}\pi^{m/2}\Gamma(1-\frac\alpha2)}\,\frac{\d y}{|y|^{m+\alpha}}
$$
is the L\'{e}vy measure of $Z_t$. According to \cite[Theorem 3.4]{P}, there exists (large enough) $\lambda>0$ such that \eqref{A10} has a unique solution $u^{\ll,\mu,\nu} \in C^1\big([0,T];C_{b}^{\alpha+\beta}\left(\mathbb{R}^{d};\R^d\right)\big)$ with
\begin{equation}\label{A20}
\sup_{\mu,\nu\in C([0,T];\scr P_k),t\in[0,T]}\|\nn u_t^{\ll,\mu,\nu}(\cdot)\|_{\8}\le
\ff{1}{2},
\end{equation}
and
\beg{equation}\label{uuu}
\sup_{\mu,\nu\in C([0,T];\scr P_k),t\in[0,T]}\|u_t^{\ll,\mu,\nu}(\cdot)\|_{\infty}+\sup_{\mu,\nu\in C([0,T];\scr P_k),t\in[0,T]}\|\nabla u^{\ll,\mu,\nu}_t(\cdot)\|_{\alpha+\beta-1}<\infty.
\end{equation}
Here $C_b^{\alpha+\beta}\left(\mathbb{R}^{d};\R^d\right)\big)$ denotes the space of all bounded functions
$g:\R^d\rightarrow\R^d$ with continuous derivatives up to order $\lfloor\alpha+\beta\rfloor$ (the integer part of $\alpha+\beta$) with the norm
$$\|g\|_{\alpha+\beta}:=\sum_{i=0}^{\lfloor\alpha+\beta\rfloor}\|\nabla^i g\|_\infty+\sup_{x\neq y}\frac{|\nabla^{\lfloor\alpha+\beta\rfloor} g(x)-\nabla^{\lfloor\alpha+\beta\rfloor} g(y)|}{|x-y|^{\alpha+\beta-\lfloor\alpha+\beta\rfloor}}.$$

Next, let $\gamma\in\scr P_k$, $\mu^i,\nu^i\in C([0,T];\scr P_k)$, $i=1,2$,
 and $\theta^{\ll,\mu^1,\nu^1}_t(x)=x+u^{\ll,\mu^1,\nu^1}_t(x)$. Take $\F_0$-measurable random variable
 $X_{0}^{\gg}$ such that $\L_{X_{0}^{\gg}}=\gg$.
 Recall that $X_{t}^{\gg,\mu^i,\nu^i}$ solves \eqref{ED00} with $s,\mu,\nu$ replaced by $0,\mu^i,\nu^i$, respectively.
 For simplicity, we denote $X_t=X_{t}^{\gg,\mu^1,\nu^1}$ and $Y_t=X_{t}^{\gg,\mu^2,\nu^2}$. Let $\tilde{N}$ be a Poisson random measure with
 compensator $\Pi(\d x)\,\d t$.
 Applying     It\^{o}'s formula (see e.g.\ \cite[Lemma 4.2]{P}) and \eqref{A10}, we have
\begin{equation*}
\begin{split}
\d \theta^{\ll,\mu^1,\nu^1}_t(X_t)&=\ll u^{\ll,\mu^1,\nu^1}_t(X_t)\,\d
t +\int_{\mathbb{R}^{m}\backslash\{0\}} \left[ \theta^{\ll,\mu^1,\nu^1}_t\left(X_{t-}+\sigma_t(\nu^1_t)x \right)
-\theta^{\ll,\mu^1,\nu^1}_t\left(X_{t-} \right) \right]\,\tilde{N}(\d x,\d t),\\
\d
\theta^{\ll,\mu^1,\nu^1}_t(Y_t)
 &=\ll
u^{\ll,\mu^1,\nu^1}_t(Y_t)\,\d t
 +\int_{\mathbb{R}^{m}\backslash\{0\}} \left[ \theta^{\ll,\mu^1,\nu^1}_t\left(Y_{t-}+\sigma_t(\nu^2_t)x \right)-\theta^{\ll,\mu^1,\nu^1}_t\left(Y_{t-}\right) \right]\,\tilde{N}(\d x,\d t)\\
&\quad+\nn\theta^{\ll,\mu^1,\nu^1}_t(Y_t)
\big[b_t(Y_t,\mu^2_t)-b_t(Y_t,\mu^1_t)\big]\,\d
t+(\A^{\nu^2}_t -\A^{\nu^1}_t) \theta^{\ll,\mu^1,\nu^1}_t(Y_t)\,\d t.
 \end{split}
\end{equation*}
By \eqref{A20} we get
$$
\frac{1}{2}|X_t-Y_t|\leq |\theta^{\lambda,\mu^1,\nu^1}_t(X_t)-\theta^{\lambda,\mu^1,\nu^1}_t(Y_t)|\leq \sum_{i=1}^5\Lambda_{i}(t),
$$
where
\beg{align*}
\Lambda_{1}(t)&:=\left|\int_{0}^{t}\int_{|x|>1}\big[\Gamma_r^{\ll,\mu^1,\nu^1}(X_{r-} ,\sigma_r(\nu^1_r)x) -\Gamma_r^{\ll,\mu^1,\nu^1}(Y_{r-} ,\sigma_r(\nu^2_r)x) \big]\,\tilde{N}(\d x,\d r)\right|,\\
\Lambda_{2}(t)&:=\left|\int_{0}^{t}\int_{0<|x|\leq 1}\big[\Gamma_r^{\ll,\mu^1,\nu^1}(X_{r-} ,\sigma_r(\nu^1_r)x) -\Gamma_r^{\ll,\mu^1,\nu^1}(Y_{r-} ,\sigma_r(\nu^2_r)x)\big]\,\tilde{N}(\d x,\d r)\right|,\\
\Lambda_{3}(t)&:=\int_{0}^{t}\lambda |u^{\ll,\mu^1,\nu^1}_r(X_r)-u^{\ll,\mu^1,\nu^1}_r(Y_r)|\,\d r,\\
\Lambda_{4}(t)&:=\int_{0}^{t}\big|\nn\theta^{\ll,\mu^1,\nu^1}_r(Y_r)
\big[b_r(Y_r,\mu^2_r)-b_r(Y_r,\mu^1_r)\big]\big|\,\d r,\\
\Lambda_{5}(t)&:=\int_{0}^{t}|(\A^{\nu^2}_r -\A^{\nu^1}_r) \theta^{\ll,\mu^1,\nu^1}_r(Y_r)|\,\d r,
\end{align*}
and
$$
    \Gamma_r^{\ll,\mu^1,\nu^1}(z,y):=\theta^{\ll,\mu^1,\nu^1}_r\left(z+ y \right)-\theta^{\ll,\mu^1,\nu^1}_r\left(z \right).
$$
Then we obtain
$$
    |X_t-Y_t|^k\leq 2^k\left(\sum_{i=1}^5\Lambda_{i}(t)\right)^k\leq 2^k\cdot5^{k-1}
    \sum_{i=1}^5\Lambda^k_{i}(t),
$$
and thus
\begin{equation}\label{upperbound}
    \mathbb{E}\left[\sup_{s\in[0,t]}|X_s-Y_s|^k\right]\leq
    C\sum_{i=1}^5\mathbb{E}\left[\sup_{s\in[0,t]}\Lambda_{i}^{k}(s)
    \right].
\end{equation}
First, by $(A2)$ and \eqref{A20}, we obtain
$$
\mathbb{E}\left[\sup_{s\in[0,t]}\Lambda_{4}^{k}(s)\right]
\leq C\left(\int_0^t[\|\mu_r^1-\mu_r^2\|_{k,var}+\W_{k}(\mu_r^1,\mu_r^2)]\,\d r\right)^k.
$$
By \eqref{A20}, we have
$$
\mathbb{E}\left[\sup_{s\in[0,t]}\Lambda_{3}^{k}(s)\right]
\leq C\int_0^t\mathbb{E}\left[\sup_{s\in[0,r]}|X_s-Y_s|^{k}\right]\d r.
$$
Moreover, \eqref{A20} and $(A3)$ imply that
\begin{align*}
    &|\Gamma_r^{\ll,\mu^1,\nu^1}(X_{r-} ,\sigma_r(\nu^1_r)x) -\Gamma_r^{\ll,\mu^1,\nu^1}(Y_{r-} ,\sigma_r(\nu^2_r)x)|\\
    &\qquad\leq|\theta_r^{\ll,\mu^1,\nu^1}(X_{r-}+\sigma_r(\nu^1_r)x)
    -\theta_r^{\ll,\mu^1,\nu^1}(Y_{r-}+\sigma_r(\nu^2_r)x)|
    +|\theta_r^{\ll,\mu^1,\nu^1}(X_{r-})-\theta_r^{\ll,\mu^1,\nu^1}(Y_{r-})|\\
    &\qquad\leq C|X_{r-}-Y_{r-}|+C|\sigma_r(\nu^1_r)x-\sigma_r(\nu^2_r)x|\\
    &\qquad\leq C|X_{r-}-Y_{r-}|+C|x|[\W_\eta(\nu^1_r,\nu^2_r)+\W_k(\nu^1_r,\nu^2_r)].
\end{align*}
This combined with the Burkholder-Davis-Gundy inequality for jump processes from \cite{Nov} (see also \cite[Theorem 3.1(i)]{KS}), implies that
\begin{align*}
\mathbb{E}\left[\sup_{s\in[0,t]}\Lambda_{1}^{k}(s)\right]&\leq  C\int_0^t\int_{|x|>1}\left\{\E[|X_{r-}-Y_{r-}|^k]+|x|^k[\W_\eta(\nu^1_r,\nu^2_r)+\W_k(\nu^1_r,\nu^2_r)]^k
\right\}\,\Pi(\d x)\d r\\
&\leq C\int_0^t\E[|X_{r-}-Y_{r-}|^k]\,\d r+C\int_0^t[\W_\eta(\nu^1_r,\nu^2_r)+\W_k(\nu^1_r,\nu^2_r)]^k\,\d r\\
&\leq C\int_0^t\E\left[\sup_{s\in[0,r]}|X_{s}-Y_{s}|^k\right]\d r+C\left(\int_0^t
[\W_\eta(\nu^1_r,\nu^2_r)+\W_k(\nu^1_r,\nu^2_r)]^2\,\d r\right)^{k/2}.
\end{align*}
Next, by \cite[Lemma 4.1]{P}, \eqref{A20}, \eqref{uuu} and $(A3)$, we get
\begin{align*}&|\Gamma_r^{\ll,\mu^1,\nu^1}(X_{r-} ,\sigma_r(\nu^1_r)x) -\Gamma_r^{\ll,\mu^1,\nu^1}(Y_{r-} ,\sigma_r(\nu^2_r)x)|\\
&\qquad\qquad\qquad\quad\leq |\Gamma_r^{\ll,\mu^1,\nu^1}(X_{r-} ,\sigma_r(\nu^1_r)x) -\Gamma_r^{\ll,\mu^1,\nu^1}(Y_{r-} ,\sigma_r(\nu^1_r)x)|\\
&\qquad\qquad\qquad\qquad+|\Gamma_r^{\ll,\mu^1,\nu^1}(Y_{r-} ,\sigma_r(\nu^1_r)x) -\Gamma_r^{\ll,\mu^1,\nu^1}(Y_{r-} ,\sigma_r(\nu^2_r)x)|\\
&\qquad\qquad\qquad\quad\leq C|x|^{\alpha+\beta-1}|X_{r-}-Y_{r-}|+C|x|[\W_\eta(\nu^1_r,\nu^2_r)+\W_k(\nu^1_r,\nu^2_r)].
\end{align*}
Since $2\alpha+2\beta-2=\alpha+(\alpha+2\beta-2)>\alpha$ and
\begin{align}\label{sma}\int_{0<|x|\leq 1}|x|^\epsilon\,\Pi(\d x)<\infty\quad \text{for all $\epsilon>\alpha$},
\end{align}
it holds that
\begin{align*}
&C\mathbb{E}\left[\sup_{s\in[0,t]}\Lambda_{2}^{k}(s)\right]\\
&\leq C\E \left(\int_{0}^{t}\int_{0<|x|\leq 1}|\Gamma_r^{\ll,\mu^1,\nu^1}(X_{r-} ,\sigma_r(\nu^1_r)x) -\Gamma_r^{\ll,\mu^1,\nu^1}(Y_{r-} ,\sigma_r(\nu^1_r)x)|^2\,\Pi(\d x)\d r\right)^{k/2}\\
&\leq C\E \left(\int_{0}^{t}\int_{0<|x|\leq 1}\left[|x|^{2\alpha+2\beta-2}|X_{r-}-Y_{r-}|^2+|x|^2[\W_\eta(\nu^1_r,\nu^2_r)+\W_k(\nu^1_r,\nu^2_r)]^2\right]\,\Pi(\d x)\d r\right)^{k/2}\\
&\leq C\E
\left(
\int_0^t|X_{r}-Y_{r}|^2\,\d r
\right)^{k/2}+C\left(\int_0^t[\W_\eta(\nu^1_r,\nu^2_r)+\W_k(\nu^1_r,\nu^2_r)]^2\,\d r\right)^{k/2}\\
&\leq \frac{1}{2}\,\E\left[\sup_{s\in[0,t]}|X_{s}-Y_{s}|^k\right]+C\int_0^t\E\left[\sup_{s\in[0,r]}|X_{s}-Y_{s}|^k\right]\d r\\
&\quad
+C\left(\int_0^t[\W_\eta(\nu^1_r,\nu^2_r)+\W_k(\nu^1_r,\nu^2_r)]^2\,\d r\right)^{k/2}.
\end{align*}
Here in the last inequality we have used the fact that if $f:[0,T]\rightarrow[0,\infty)$ is a right continuous
function having left limits, then
\begin{align*}
    C\left(\int_0^tf(r)^2\,\d r\right)^{k/2}
    &\leq \sup_{s\in[0,t]}f(s)^{k/2}\times C\left(\int_0^tf(r)\,\d r\right)^{k/2}\\
    &\leq\frac12\,\sup_{s\in[0,t]}f(s)^{k}+\frac12\,C^2\left(\int_0^tf(r)\,\d r\right)^{k}\\
    &\leq\frac12\,\sup_{s\in[0,t]}f(s)^{k}+C\int_0^tf(r)^k\,\d r\\
    &\leq\frac12\,\sup_{s\in[0,t]}f(s)^{k}+C\int_0^t\sup_{s\in[0,r]}f(s)^k\,\d r.
\end{align*}
Finally, it follows from $(A3)$, \eqref{A20}, \eqref{uuu} and \eqref{sma} that
\begin{align*}
&\mathbb{E}\left[\sup_{s\in[0,t]}\Lambda_{5}^{k}(s)\right]\\
&\leq\E\bigg(\int_0^t\int_{\mathbb{R}^{m}\setminus\{0\}}\Big|\theta^{\ll,\mu^1,\nu^1}_r(Y_r+\sigma_r(\nu^1_r)y) -\theta^{\ll,\mu^1,\nu^1}_r(Y_r+\sigma_r(\nu^2_r)y)\\
&\qquad\qquad\qquad\quad-\langle \sigma_r(\nu^1_r)y-\sigma_r(\nu^2_r)y,\nabla \theta^{\ll,\mu^1,\nu^1}_r(Y_r)
\rangle\mathds{1}_{\{ |y|\leq 1\}} \Big|\,\Pi(\d y)\d r\bigg)^k\\
&\leq C\left(\int_{|y|>1}|y|\,\Pi(\d y)+\int_{0<|y|\leq 1}|y|^{\alpha+\beta}\,\Pi(\d y)\right)^k\left(\int_{0}^t[\W_\eta(\nu^1_r,\nu^2_r)+\W_k(\nu^1_r,\nu^2_r)]\,\d r\right)^k\\
&\leq C\left(\int_{0}^t[\W_\eta(\nu^1_r,\nu^2_r)+\W_k(\nu^1_r,\nu^2_r)]\,\d r\right)^k\\
&\leq C\left(\int_{0}^t[\W_\eta(\nu^1_r,\nu^2_r)+\W_k(\nu^1_r,\nu^2_r)]^2\,\d r\right)^{k/2}.
\end{align*}
Combining \eqref{upperbound} and the above estimates, we get
\begin{align*}
    \mathbb{E}\left[\sup_{s\in[0,t]}|X_s-Y_s|^k\right]
    &\leq C\int_0^t\mathbb{E}\left[\sup_{s\in[0,r]}|X_s-Y_s|^{k}\right]\d r\\
    &\quad+C\left(\int_0^t[\|\mu_r^1-\mu_r^2\|_{k,var}+\W_{k}(\mu_r^1,\mu_r^2)]\,\d r\right)^k\\
    &\quad+C\left(\int_{0}^t[\W_\eta(\nu^1_r,\nu^2_r)+\W_k(\nu^1_r,\nu^2_r)]^2\,\d r\right)^{k/2}.
\end{align*}
This, together with Gronwall's inequality, implis
\begin{align*}
    \W_k(\L_{X_t},\L_{Y_t})^k&\leq\mathbb{E}\left[\sup_{s\in[0,t]}|X_s-Y_s|^k\right]\\
    &\leq
    C\left(\int_0^t[\|\mu_r^1-\mu_r^2\|_{k,var}+\W_{k}(\mu_r^1,\mu_r^2)]\,\d r\right)^k\\
    &\quad +C\left(\int_{0}^t[\W_\eta(\nu^1_r,\nu^2_r)+\W_k(\nu^1_r,\nu^2_r)]^2\,\d r\right)^{k/2}.
\end{align*}
Then for any $\delta>0$ and $t\in[0,T]$,
\begin{align*}
    \e^{-\delta t}\W_k(\L_{X_t},\L_{Y_t})
    &\leq C\e^{-\delta t}\int_0^t\e^{-\delta r}[\|\mu_r^1-\mu_r^2\|_{k,var}+\W_{k}(\mu_r^1,\mu_r^2)]\cdot\e^{\delta r}\,\d r\\
    &\quad+C\e^{-\delta t}\left(\int_{0}^t\e^{-2\delta r}[\W_\eta(\nu^1_r,\nu^2_r)+\W_k(\nu^1_r,\nu^2_r)]^2\cdot\e^{2\delta r}\,\d r\right)^{1/2}\\
    &\leq C\sup_{r\in[0,T]}\e^{-\delta r}[\|\mu_r^1-\mu_r^2\|_{k,var}+\W_{k}(\mu_r^1,\mu_r^2)]
    \times\int_0^t\e^{-\delta (t-s)}\,\d s\\
    &\quad+C\sup_{r\in[0,T]}\e^{-\delta r}[\W_\eta(\nu^1_r,\nu^2_r)+\W_k(\nu^1_r,\nu^2_r)]
    \times\left(\int_{0}^t\e^{-2\delta (t-s)}\,\d s\right)^{1/2}\\
    &\leq \frac{C}{\delta}\sup_{r\in[0,T]}\e^{-\delta r}[\|\mu_r^1-\mu_r^2\|_{k,var}+\W_{k}(\mu_r^1,\mu_r^2)]\\
    &\quad+\frac{C}{\sqrt{2\delta}}\sup_{r\in[0,T]}\e^{-\delta r}[\W_\eta(\nu^1_r,\nu^2_r)+\W_k(\nu^1_r,\nu^2_r)],
\end{align*}
 which completes the proof.
 \end{proof}

\subsection{The method of time-change}

It is well-known that an $m$-dimensional rotationally symmetric $\alpha$-stable L\'evy process $Z_t$ can be represented
as subordinated Brownian motion, see for instance \cite{SSZ12}.  More precisely, let $S_t$ be an $\frac{\alpha}{2}$-stable
subordinator, i.e.\ $S_t$ is a $[0,\infty)$-valued L\'evy process with the following Laplace transform:
$$
    \E\left[\e^{-rS_t}\right] = \e^{-2^{-1}t (2r)^{\alpha/2}},\quad r>0,\,t\geq 0,
$$
and let $W_t$ be an $m$-dimensional standard Brownian motion, which is independent of $S_t$.
The time-changed process $Z_{t}:=W_{S_{t}}$ is an $m$-dimensional rotationally symmetric $\alpha$-stable L\'evy process
such that $\E\,\e^{\iup \<\xi, Z_t\>}=\e^{-t|\xi|^\alpha/2}$ for $\xi\in\R^m$.
Using the subordination representation, \eqref{E1} can be written in the following form
$$
\d X_t=b_t(X_t,\L_{X_t})\,\d t+\sigma_t(\L_{X_t})\,\d W_{S_t},\quad t\in[0,T].
$$

For $ 0\leq s< t\leq T$, $x\in\R^d$, and $\nu\in C([0,T];\scr P_k)$, let $q_{s,t}^{\nu}(x,\cdot)$ be the density
function of the random variable
$$
    Y_{s,t}^{x,\nu}:= x + \int_s^t \si_r(\nu_r)\,\d W_{S_r}.
$$
As before, we write $q_{t}^{\nu}(x,\cdot)=q_{0,t}^{\nu}(x,\cdot)$ for $t>0$.
Since $W_t$ is independent of $S_t$, one has
\beq\label{ES6'}
    q_{s,t}^{\nu}(x,y)=\E\,q_{s,t}^{\nu,S}(x,y),
\end{equation}
where
$$
    q_{s,t}^{\nu,S}(x,y):=\ff{\exp\left[-\ff 1 {2} \left\<(a_{s,t}^{\nu,S})^{-1}(y-x),  y-x\right\>\right]}{(2\pi )^{d/2}
    ({\rm det} \{a_{s,t}^{\nu,S}\})^{1/2}}\quad \text{and}\quad
    a_{s,t}^{\nu,S} :=  \int_s^t(\si_r\si_r^*)( \nu_r)\,\d S_r.
$$
Obviously, $(A3)$   implies
\begin{equation}\label{mv}
    \|a_{s,t}^{\nu^1,S}-a_{s,t}^{\nu^2,S}\|\le 2K_2^{3/2}\int_s^t[\W_k(\nu^1_r,\nu^2_r)+\W_\eta(\nu^1_r,\nu^2_r)]\,\d S_r,
    \quad \nu^1,\nu^2\in C([0,T];\scr P_k),
\end{equation}
\begin{equation}\label{mv2}
    \frac{1}{K_2(S_t-S_s)}\leq \|[a_{s,t}^{\nu,S}]^{-1}\|\leq \frac{K_2}{S_t-S_s},
    \quad \nu\in C([0,T];\scr P_k).
\end{equation}

For $0\leq s< t\leq T$ and $x,y\in \R^d$, let
\begin{align*}
q_{s,t}^{\nu,S}(x,y):=\ff{\exp\left[-\ff 1 {2} \left\<(a_{s,t}^{\nu,S})^{-1}(y-x),  y-x\right\>\right]}{(2\pi )^{d/2}
({\rm det} \{a_{s,t}^{\nu,S}\})^{1/2}}\quad \text{and}\quad
\tilde{q}_{s,t}^{S}(x,y):=\ff{\exp\left[-\frac{|y-x|^2}{4K_2(S_t-S_s)}\right]}{(4K_2\pi(S_t-S_s))^{d/2}}.
\end{align*}
By \eqref{mv} and  \eqref{mv2}, one can apply the argument used in the
proof of \cite[Lemma 3.1]{HWJMAA} to
get the following lemma. To save space, we omit the proof.

\beg{lem}\label{L20}  Assume $(A3)$. For any $0\leq s<t\le T$, $x,y\in \R^d$ and $
\nu^1,\nu^2,\nu\in C([0,T];\scr P_k)$,
\begin{align*}
    &|q_{s,t}^{\nu^1,S}(x,y)-q_{s,t}^{\nu^2,S}(x,y)|\leq C  \tilde{q}^{S}_{s,t}(x,y)
    (S_t-S_s)^{-1}\int_s^t[\W_\eta(\nu^1_r,\nu^2_r)+\W_k(\nu^1_r,\nu^2_r)]\,\d S_r,\\
    &|\nabla q_{s,t}^{\nu,S}(\cdot,y)(x)|\leq C \tilde{q}^{S}_{s,t}(x,y)(S_t-S_s)^{-1/2},\\
    &|\nabla q_{s,t}^{\nu^1,S}(\cdot,y)(x)-\nabla q_{s,t}^{\nu^2,S}(\cdot,y)(x)|\leq
      C\tilde{q}^{S}_{s,t}(x,y)(S_t-S_s)^{-3/2}
    \int_s^t[\W_\eta(\nu^1_r,\nu^2_r)+\W_k(\nu^1_r,\nu^2_r)]\,\d S_r.
\end{align*}
\end{lem}

\beg{lem}\label{L2}  Assume $(A3)$.  For any $0\leq s<t\le T$, $x\in \R^d$, $
\nu^1,\nu^2,\nu\in C([0,T];\scr P_k)$ and $\epsilon\in[0,\alpha)$,
\begin{equation}\label{g0'}
\begin{aligned}
&\int_{\R^d}|q_{s,t}^{\nu^1}(x,y)-q_{s,t}^{\nu^2}(x,y)||y-x|^\epsilon\,\d y\\
&\qquad\qquad\;\leq C \E\left[(S_t-S_s)^{-1+\epsilon/2}\int_s^t[\W_\eta(\nu^1_r,\nu^2_r)+\W_k(\nu^1_r,\nu^2_r)]\,\d S_r
\right],
\end{aligned}
\end{equation}
\begin{equation}\label{g1'}
\begin{aligned}
\int_{\R^d}|\nabla q_{s,t}^{\nu}(\cdot,y)(x)||y-x|^\epsilon\,\d y&\leq C (t-s)^{(-1+\epsilon)/\alpha},
\end{aligned}
\end{equation}
\begin{equation}\label{g2'}
\begin{aligned}
&\int_{\R^d}|\nabla q_{s,t}^{\nu^1}(\cdot,y)(x)-\nabla q_{s,t}^{\nu^2}(\cdot,y)(x)||y-x|^\epsilon\,\d y\\
&\qquad\qquad\leq
 C \E\left[(S_t-S_s)^{(-3+\epsilon)/2}
 \int_s^t[\W_\eta(\nu^1_r,\nu^2_r)+\W_k(\nu^1_r,\nu^2_r)]\,\d S_r
 \right].
\end{aligned}
\end{equation}
\end{lem}
\begin{proof}
It follows from \eqref{ES6'} and the first inequality in Lemma \ref{L20} that for all $\epsilon\in[0,\alpha)$,
\begin{align*}
&\int_{\R^d}| q_{s,t}^{\nu^1}(x,y)- q_{s,t}^{\nu^2}(x,y)||y-x|^\epsilon\,\d y\\
&\leq C\E\left[(S_t-S_s)^{-1}\int_s^t[\W_\eta(\nu^1_r,\nu^2_r)+\W_k(\nu^1_r,\nu^2_r)]\,\d S_r
\times\int_{\R^d} \tilde{q}^{S}_{s,t} (x,y)  |y-x|^\epsilon\,\d y\right]\\
&=C\E\left[
(S_t-S_s)^{-1+\epsilon/2}\int_s^t[\W_\eta(\nu^1_r,\nu^2_r)+\W_k(\nu^1_r,\nu^2_r)]\,\d S_r
\right]\times\int_{\R^d} |y|^\epsilon\ff{\exp\left[-\frac{|y|^2}{4K_2}\right]}{(4K_2\pi)^{d/2}}\,\d y\\
&\leq C\E\left[
(S_t-S_s)^{-1+\epsilon/2}\int_s^t[\W_\eta(\nu^1_r,\nu^2_r)+\W_k(\nu^1_r,\nu^2_r)]\,\d S_r
\right].
\end{align*}
This is what we have claimed in \eqref{g0'}. We can prove \eqref{g2'} in a similar way.
To prove \eqref{g1'}, we use \eqref{ES6'} and the second inequality in Lemma \ref{L20} to get
\begin{align*}
\int_{\R^d}|\nabla q_{s,t}^{\nu}(\cdot,y)(x)||y-x|^\epsilon\,\d y
&\leq C\E\left[(S_t-S_s)^{-1/2}\int_{\R^d} \tilde{q}^{S}_{s,t} (x,y)  |y-x|^\epsilon\,\d y\right]\\
&=C\E\left[(S_t-S_s)^{(-1+\epsilon)/2}\right]\times\int_{\R^d} |y|^\epsilon\ff{\exp\left[-\frac{|y|^2}{4K_2}\right]}
{(4K_2\pi)^{d/2}}\,\d y\\
&\leq C\E\left[S_{t-s}^{(-1+\epsilon)/2}\right].
\end{align*}
Since by \cite[Lemma 4.1]{DS19},
$$
    \E\left[S_{t-s}^{(-1+\epsilon)/2}\right]
    =\frac{\Gamma\left(1-\frac{-1+\epsilon}{\alpha}\right)}{\Gamma\left(\frac{3-\epsilon}{2}\right)}\,(t-s)^{(-1+\epsilon)/\alpha}
    \leq C(t-s)^{(-1+\epsilon)/\alpha}
    \quad \text{for all $\epsilon\in[0,\alpha)$},
$$
this completes the proof.
\end{proof}

Recall that
$$
    X_{s,t}^{x,\mu,\nu}=x+\int_s^tb_r(X_{s,r}^{x,\mu,\nu},\mu_r)\,\d r
    +\int_s^t\sigma_r(\nu_r)\,\d W_{S_r},\quad x\in\R^d,\,0\leq s< t\leq T.
$$
Since $b$ and $\sigma$ are bounded due to $(A2)$ and $(A3)$, it follows from
$\E [S_T^{k/2}]=T^{k/\alpha}\E [S_1^{k/2}]<\infty$ that
\beg{equation}\begin{split}\label{gr4e23}
   \E\left[\sup_{t\in[s,T]}|X_{s,t}^{x,\mu,\nu}|^k\right]
   &\leq C|x|^k+C+C\E\left[\sup_{t\in[s,T]}
   \left|
   \int_s^t\sigma_r(\nu_r)\,\d W_{S_r}
   \right|^k
   \right]\\
   &\leq C|x|^k+C+C\E [S_T^{k/2}]\\
   &\leq C(1+|x|^k).
\end{split}
\end{equation}
This implies
\begin{equation}\label{NNT}
    \E\left[1+|X_{s,t}^{x,\mu,\nu}|^k\right]
    \leq C (1+|x|^k),\quad \mu,\nu\in C([0,T];\scr P_k).
\end{equation}

We denote by $p_{s,t}^{\mu,\nu}(x,\cdot)$ the
density function of $X_{s,t}^{x,\mu,\nu}$. Denote by $P_{s,t}^{\mu,\nu}$ and $Q_{s,t}^\nu$ the inhomogeneous Markov semigroups associated
with $X_{s,t}^{x,\mu,\nu}$ and $Y_{s,t}^{x,\nu}$, respectively, i.e.\ for $f\in \scr B_b(\R^d)$,
\begin{align*}
    P_{s,t}^{\mu,\nu}f(x)&=\E f(X_{s,t}^{x,\mu,\nu})=\int_{\R^d}p_{s,t}^{\mu,\nu}(x,y)f(y)\,\d y,\\
Q_{s,t}^\nu f(x)&=\E f(Y_{s,t}^{x,\nu})=\int_{\R^d}q_{s,t}^{\nu}(x,y)f(y)\,\d y.
 \end{align*}
As before, write $p_{t}^{\mu,\nu}(x,\cdot)=p_{0,t}^{\mu,\nu}(x,\cdot)$,
$P_{t}^{\mu,\nu}=P_{0,t}^{\mu,\nu}$ and $Q_{t}^\nu=Q_{0,t}^\nu$ for $t>0$.

\begin{lem}\label{dunh4}
Assume $(A2)$ and $(A3)$. Then for any $0\leq s<t\leq T$, $\mu,\nu\in C([0,T];\scr P_k)$,
and $f\in\scr B_b(\R^d)$,
$$
P_{s,t}^{\mu,\nu}f = Q_{s,t}^{\nu}f + \int_s^t  P_{s,r}^{\mu,\nu}
\left\<b_r(\cdot, \mu_r),\nabla Q_{r,t}^{\nu}f\right\>\d r.
$$
\end{lem}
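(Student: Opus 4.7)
The plan is to derive the identity as a Duhamel formula by applying It\^o's formula to the backward Kolmogorov solution associated with the driftless process $Y_{s,t}^{x,\nu}$, evaluated along the trajectory of the drifted process $X_{s,t}^{x,\mu,\nu}$. Fix $t\in(s,T]$, $\nu\in C([0,T];\scr P_k)$ and $f\in\scr B_b(\R^d)$, and set
$$
g(r,x):=Q_{r,t}^{\nu}f(x)=\int_{\R^d}q_{r,t}^{\nu}(x,y)f(y)\,\d y,\qquad r\in[s,t],\; x\in\R^d.
$$
Because $Y_{r,t}^{x,\nu}=x+\int_r^t\sigma_u(\nu_u)\,\d W_{S_u}$ is a (time-inhomogeneous) Markov process with infinitesimal generator $\A_r^{\nu}$ defined in \eqref{L34}, its semigroup $Q_{r,t}^{\nu}$ solves the backward Kolmogorov equation
$$
\partial_r g(r,x)+\A_r^{\nu}g(r,x)=0,\quad r\in[s,t),\qquad g(t,\cdot)=f.
$$

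The first step is to establish the formula when $f\in C_c^\infty(\R^d)$, in which case standard heat-kernel arguments applied to $q_{r,t}^{\nu}$ show that $g\in C^{1,2}([s,t]\times\R^d)$ with bounded derivatives on $[s,t-\epsilon]\times\R^d$ for every $\epsilon>0$. Next, apply the It\^o formula for jump processes (cf.\ \cite[Lemma 4.2]{P}) to $g(r,X_{s,r}^{x,\mu,\nu})$: since $X^{x,\mu,\nu}$ has drift $b_r(\cdot,\mu_r)$ and the same pure-jump part as $Y^{x,\nu}$ (so its jump generator is also $\A_r^{\nu}$), we obtain
\begin{align*}
\d g(r,X_{s,r}^{x,\mu,\nu})
&=\big[\partial_r g+\A_r^{\nu}g\big](r,X_{s,r}^{x,\mu,\nu})\,\d r
+\big\<b_r(X_{s,r}^{x,\mu,\nu},\mu_r),\nabla g(r,X_{s,r}^{x,\mu,\nu})\big\>\d r+\d M_r,
\end{align*}
where $M$ is a local martingale. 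The bracket vanishes by the Kolmogorov equation. Taking expectation (the martingale is genuine thanks to the boundedness of $\nabla g$ and $b$), and using $g(s,x)=Q_{s,t}^{\nu}f(x)$ and $g(t,X_{s,t}^{x,\mu,\nu})=f(X_{s,t}^{x,\mu,\nu})$, yields
$$
P_{s,t}^{\mu,\nu}f(x)-Q_{s,t}^{\nu}f(x)=\int_s^t\E\big\<b_r(X_{s,r}^{x,\mu,\nu},\mu_r),\nabla Q_{r,t}^{\nu}f(X_{s,r}^{x,\mu,\nu})\big\>\d r,
$$
which is the claimed identity rewritten via the semigroup $P_{s,r}^{\mu,\nu}$.

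To handle $f\in\scr B_b(\R^d)$, approximate by a sequence $f_n\in C_c^\infty(\R^d)$ with $\|f_n\|_\infty\le\|f\|_\infty$ and $f_n\to f$ pointwise. Both sides of the identity pass to the limit: the left-hand side by dominated convergence against the densities $p_{s,t}^{\mu,\nu}(x,\cdot)$ and $q_{s,t}^{\nu}(x,\cdot)$, and the right-hand side because \eqref{g1'} (with $\epsilon=0$) gives
$$
|\nabla Q_{r,t}^{\nu}f_n(x)|\le C(t-r)^{-1/\alpha}\|f\|_\infty,
$$
which, combined with $\|b\|_\infty<\infty$, supplies an integrable majorant over $r\in[s,t]$ since $\alpha>1$.

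The main technical obstacle is the application of It\^o's formula up to the terminal time $t$: $g(r,\cdot)$ loses smoothness as $r\uparrow t$, so one in fact applies the identity on $[s,t-\epsilon]$ and lets $\epsilon\downarrow 0$, using the gradient bound \eqref{g1'} to control the remainder. The integrability of $(t-r)^{-1/\alpha}$, which crucially relies on the assumption $\alpha>1$ from $(A1)$, is precisely what makes both the It\^o step and the approximation step work.
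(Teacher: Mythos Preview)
Your proof is correct and yields the same Duhamel identity as the paper, but via a slightly different route. The paper differentiates the composition $r\mapsto P_{s,r}^{\mu,\nu}Q_{r,t}^{\nu}f$ directly, invoking both the forward Kolmogorov equation for $P^{\mu,\nu}$ and the backward one for $Q^{\nu}$; the product rule then immediately isolates the drift contribution $P_{s,r}^{\mu,\nu}\langle b_r(\cdot,\mu_r),\nabla Q_{r,t}^{\nu}f\rangle$. Your approach instead applies It\^o's formula to $Q_{r,t}^{\nu}f(X_{s,r}^{x,\mu,\nu})$ and takes expectations, which is the pathwise counterpart of the same computation. The paper's semigroup argument is a bit shorter because it avoids the bookkeeping you (correctly) carry out for the local-martingale term and the degeneration of smoothness as $r\uparrow t$; conversely, your version makes explicit where the gradient bound \eqref{g1'} and the integrability of $(t-r)^{-1/\alpha}$ (hence $\alpha>1$) enter, which the paper's proof leaves implicit in its ``standard approximation argument''.
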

\begin{proof}
By a standard approximation argument, it suffices to prove the desired assertion
for $f\in C_b^2(\R^d)$. By the backward Kolmogorov equation, it holds that
$$
\frac{\partial Q_{r,t}^{\nu}f}{\partial r}
=-\A_r^{\nu}(Q_{r,t}^{\nu}f),\quad 0\leq r< t\leq T,
$$
where $\A^{\nu}_r f$ is given by \eqref{L34}.
Similarly, we have the forward Kolmogorov equation
$$\frac{\partial P_{s,r}^{\mu,\nu}f}{\partial r}
=P_{s,r}^{\mu,\nu} [\A_{r}^{\mu,\nu}f],\quad  0\leq s< r\leq T,$$
where
$$
\A^{\mu,\nu}_r f(x):=\<b_t(x,\mu_r),\nabla f(x)\>+\A^{\nu}_r f(x).
$$
Hence, we have
\begin{align*}
    P_{s,t}^{\mu,\nu}f -Q_{s,t}^{\nu}f
    &=\int_s^t
    \frac{\partial}{\partial r}[P_{s,r}^{\mu,\nu}Q^{\nu}_{r,t}f]\,\d r\\
    &=\int_s^t
   P_{s,r}^{\mu,\nu}\{[\A_{r}^{\mu,\nu}-\A_{r}^{\nu}]Q^{\nu}_{r,t}f\}
    \,\d r\\
    &=\int_s^t  P_{s,r}^{\mu,\nu} \left\<b_r(\cdot, \mu_r),\nabla Q_{r,t}^{\nu}f\right\>\d r,
\end{align*}
and this completes the proof.
\end{proof}

\subsection{An auxiliary lemma}

\begin{lem}\label{vdh22s}
Assume $(A1)$-$(A3)$. Let $\gg\in \scr P_k$ and $\mu^i,\nu^i\in C([0,T];\scr P_k)$, $i=1,2$.
Then for large enough $\delta>0$,
\begin{align*}
    &\sup_{t\in[0,T]}\e^{-\delta t}\big\|\L_{{X}_{t}^{\gg,\mu^1,\nu^1}}-\L_{{X}_{t}^{\gg,\mu^2,\nu^2}}\big\|_{k,var}
    \leq C\gamma(1+|\cdot|^k)\sup_{t\in[0,T]}\e^{-\delta t}[\W_\eta(\nu^1_t,\nu^2_t)+\W_k(\nu^1_t,\nu^2_t)]\\
    &\quad\qquad\qquad\qquad\qquad\qquad+C\gamma(1+|\cdot|^k)\delta^{1/\alpha-1}
    \sup_{t\in[0,T]}\e^{-\delta t}\left[
    \|\mu^1_t-\mu^2_t\|_{k,var}+\W_{k}(\mu^1_t,\mu^2_t)
    \right],\\
    &\sup_{t\in[0,T]}\e^{-\delta t}\W_\eta\big(\L_{{X}_{t}^{\gg,\mu^1,\nu^1}},\L_{{X}_{t}^{\gg,\mu^2,\nu^2}}\big)
    \leq \frac14\sup_{t\in[0,T]}\e^{-\delta t}[\W_\eta(\nu^1_t,\nu^2_t)+\W_k(\nu^1_t,\nu^2_t)]\\
    &\quad\qquad\qquad\qquad\qquad\qquad+C\gamma(1+|\cdot|^k)\delta^{1/\alpha-1}
    \sup_{t\in[0,T]}\e^{-\delta t}\left[
    \|\mu^1_t-\mu^2_t\|_{k,var}+\W_{k}(\mu^1_t,\mu^2_t)
    \right].
\end{align*}
\end{lem}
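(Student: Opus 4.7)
The strategy is to use the Duhamel formula of Lemma~\ref{dunh4} and the density estimates of Lemma~\ref{L2}. For an appropriate test function $f$, decompose
\begin{align*}
    P_t^{\mu^1,\nu^1}f - P_t^{\mu^2,\nu^2}f
    &= (Q_t^{\nu^1} - Q_t^{\nu^2})f \\
    &\quad + \int_0^t P_r^{\mu^1,\nu^1}\langle b_r(\cdot,\mu_r^1), \nabla(Q_{r,t}^{\nu^1} - Q_{r,t}^{\nu^2})f\rangle\,\d r \\
    &\quad + \int_0^t P_r^{\mu^1,\nu^1}\langle b_r(\cdot,\mu_r^1) - b_r(\cdot,\mu_r^2), \nabla Q_{r,t}^{\nu^2}f\rangle\,\d r \\
    &\quad + \int_0^t (P_r^{\mu^1,\nu^1} - P_r^{\mu^2,\nu^2})\langle b_r(\cdot,\mu_r^2), \nabla Q_{r,t}^{\nu^2}f\rangle\,\d r,
\end{align*}
call the four pieces (I), (II), (III), (IV); integrate against $\gamma(\d x)$ and take $e^{-\delta t}$-weighted suprema. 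Write $g(r) := \W_\eta(\nu^1_r,\nu^2_r) + \W_k(\nu^1_r,\nu^2_r)$.

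For the first bound I test against $|f| \le 1 + |\cdot|^k$ and use $|f(y)| \le C(1+|x|^k) + C|y-x|^k$. Applying Lemma~\ref{L2} with $\epsilon \in \{0,k\} \subset [0,\alpha)$, the moment bound \eqref{NNT}, the elementary inequality $\int_0^t g(r)\,\d S_r \le e^{\delta t}(\sup_s e^{-\delta s}g(s))\,S_t$, and $\E[S_t^{k/2}] < \infty$ (since $k < \alpha$), pieces (I) and (II) contribute $C\gamma(1+|\cdot|^k)\sup_t e^{-\delta t}g(t)$. The bound $|\nabla Q_{r,t}^{\nu^2}f(x)| \le C(1+|x|^k)(t-r)^{-1/\alpha}$ together with the identity $\int_0^\infty u^{-1/\alpha}e^{-\delta u}\,\d u = \Gamma(1-1/\alpha)\delta^{1/\alpha - 1}$ turns (III) into $C\gamma(1+|\cdot|^k)\delta^{1/\alpha - 1}\sup_t e^{-\delta t}[\|\mu^1-\mu^2\|_{k,var}+\W_k(\mu^1,\mu^2)]$, while (IV) produces the same factor in front of $\sup_t e^{-\delta t}\|\L_{X_t^{\gg,\mu^1,\nu^1}} - \L_{X_t^{\gg,\mu^2,\nu^2}}\|_{k,var}$, which I absorb into the left-hand side by choosing $\delta$ large enough that the coefficient is at most $1/2$.

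For the second bound I invoke the adjoint formula $\W_\eta(\mu,\nu) = \sup_{[f]_\eta \le 1,\, f(0)=0}|\mu(f) - \nu(f)|$ and test against such $f$, giving $|f(y) - f(x)| \le |y-x|^\eta$. The identities $\int q_{r,t}^{\nu^i}(x,y)\,\d y = 1$ and $\int \nabla_x q_{r,t}^{\nu^i}(x,y)\,\d y = 0$ let me replace $f(y)$ by $f(y) - f(x)$ in every kernel and gradient integral; Lemma~\ref{L2} with $\epsilon = \eta$ then bounds (I), (II), (III) by $x$-independent quantities, eliminating the $\gamma(1+|\cdot|^k)$ factor on the $\nu$-contribution. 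Piece (III) becomes $C\delta^{(1-\eta)/\alpha - 1}\sup_t e^{-\delta t}[\|\mu^1-\mu^2\|_{k,var}+\W_k(\mu^1,\mu^2)] \le C\gamma(1+|\cdot|^k)\delta^{1/\alpha - 1}\sup(\cdots)$ for $\delta \ge 1$. For (IV), the uniform bound $|\langle b_r(\cdot,\mu_r^2),\nabla Q_{r,t}^{\nu^2}f\rangle(x)| \le C(t-r)^{(-1+\eta)/\alpha}$ yields a contribution dominated by $C\int_0^t(t-r)^{(-1+\eta)/\alpha}\|\L_{X_r^{\gg,\mu^1,\nu^1}} - \L_{X_r^{\gg,\mu^2,\nu^2}}\|_{k,var}\,\d r$; substituting the already-proved first inequality produces both a small $\nu$-contribution (folded into the $1/4$ factor) and a $\gamma(1+|\cdot|^k)\delta^{1/\alpha - 1}$ $\mu$-contribution matching the statement.

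The main obstacle is pieces (I) and (II) in the $\W_\eta$ estimate. Lemma~\ref{L2} gives $|\text{piece (I)}| \le C\E[S_t^{-1+\eta/2}\int_0^t g(r)\,\d S_r]$, and after extracting $\sup_s e^{-\delta s}g(s)$ the $e^{-\delta t}$-weighted coefficient reduces to $\Psi(t,\delta) := \E[S_t^{-1+\eta/2}\int_0^t e^{-\delta(t-r)}\,\d S_r]$; a similar reduction for (II) leaves $\E[\int_0^t(S_t - S_r)^{(-3+\eta)/2}\int_r^t e^{-\delta(t-s)}\,\d S_s\,\d r]$. To achieve the $1/4$ coefficient, I need both quantities to tend to $0$ as $\delta \to \infty$, uniformly in $t \in [0,T]$. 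Pointwise convergence is a dominated convergence argument: as $\delta \to \infty$, the inner $\int e^{-\delta(t-\cdot)}\,\d S$-integrals vanish almost surely because $S$ has no fixed jump at $t$ and are pathwise bounded by $S_t$ (respectively $S_t - S_r$); the dominating expectations $\E[S_t^{\eta/2}]$ and $\E\int_0^t (S_t-S_r)^{(-1+\eta)/2}\,\d r$ are finite because $\eta < \alpha$. The uniformity in $t$ requires a scaling argument exploiting the $\frac{\alpha}{2}$-self-similarity of $S$, and this is precisely the content of the Appendix limits promised by the paper.
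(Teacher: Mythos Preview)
Your proposal is correct and follows the same Duhamel-plus-density-estimate strategy as the paper. The differences are cosmetic: your telescoping order swaps the roles of $(\mu^1,\nu^1)$ and $(\mu^2,\nu^2)$ in the intermediate terms (your (II), (III), (IV) correspond to the paper's $\mathsf{J}_4$, $\mathsf{J}_3$, $\mathsf{J}_2$ up to this symmetry), and in the $\W_\eta$ estimate you push the centering trick $f(y)\mapsto f(y)-f(x)$ through all kernel/gradient integrals in (I)--(III), whereas the paper applies it only to the pure $Q$-term $\mathsf{J}_1$ and reuses the $|f|\le 1+|\cdot|^k$ bounds for $\mathsf{J}_2,\mathsf{J}_3,\mathsf{J}_4$. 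Your route gives marginally sharper intermediate estimates (no $\gamma(1+|\cdot|^k)$ factor on pieces (II) and (III), exponent $(1-\eta)/\alpha-1$ instead of $1/\alpha-1$ in some places), but the final inequalities are the same, and in both arguments the $1/4$ coefficient ultimately forces $\delta$ to depend on $\gamma$ through the term where the first inequality is substituted. Your identification of Lemma~\ref{slimit} (the Appendix limits) as the device that turns pointwise decay into uniform-in-$t$ decay is exactly what the paper does.
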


\begin{proof}
By Lemma \ref{dunh4} with $s=0$, we obtain that for $t>0$ and $f\in\scr B_b(\R^d)$,
\begin{align*}
&\int_{\R^d}\big[
    P_{t}^{\mu^1,\nu^1}f(x)-P_{t}^{\mu^2,\nu^2}f(x)\big]\,\gg(\d x)\\
&=\int_{\R^d} \big[Q_{t}^{\nu^1}f(x)-Q_{t}^{\nu^2}f(x)\big] \,\gg(\d x)\\
&\quad+\int_{\R^d}\gg(\d x)\int_0^t \big[ P_{r}^{\mu^1,\nu^1} \left\<b_r(\cdot, \mu^1_r),\nabla Q_{r,t}^{\nu^1}f\right\>(x)-P_{r}^{\mu^2,\nu^2} \left\<b_r(\cdot, \mu^2_r),\nabla Q_{r,t}^{\nu^2}f\right\>(x)\big]\d r\\
&=\int_{\R^d} \big[Q_{t}^{\nu^1}f(x)-Q_{t}^{\nu^2}f(x)\big] \,\gg(\d x) \\
&\quad+\int_{\R^d}\gg(\d x)\int_0^t \big[ P_{r}^{\mu^1,\nu^1} \left\<b_r(\cdot, \mu^1_r),\nabla Q_{r,t}^{\nu^1}f\right\>(x)-P_{r}^{\mu^2,\nu^2} \left\<b_r(\cdot, \mu^1_r),\nabla Q_{r,t}^{\nu^1}f\right\>(x)\big]\d r\\
&\quad+\int_{\R^d}\gg(\d x)\int_0^t  P_{r}^{\mu^2,\nu^2} \left\<b_r(\cdot, \mu^1_r)-b_r(\cdot, \mu^2_r),
\nabla Q_{r,t}^{\nu^1}f\right\>(x)\,\d r\\
&\quad+\int_{\R^d}\gg(\d x)\int_0^t  P_{r}^{\mu^2,\nu^2} \left\<b_r(\cdot, \mu^2_r),\nabla Q_{r,t}^{\nu^1}f-\nabla Q_{r,t}^{\nu^2}f
\right\>(x)\,\d r\\
&=:\mathsf{J}_1+\mathsf{J}_2+\mathsf{J}_3+\mathsf{J}_4.
\end{align*}
Note that
\begin{align*}
    \big\|\L_{{X}_{t}^{\gg,\mu^1,\nu^1}}-\L_{{X}_{t}^{\gg,\mu^2,\nu^2}}\big\|_{k,var}
    &=\sup_{f\in\scr B_b(\R^d),|f|\leq 1+|\cdot|^k}\left|
    \int_{\R^d}\big[
    P_{t}^{\mu^1,\nu^1}f(x)-P_{t}^{\mu^2,\nu^2}f(x)\big]\,\gg(\d x)\right|\\
    &\leq\sum_{i=1}^4\sup_{f\in\scr B_b(\R^d),|f|\leq 1+|\cdot|^k}|\mathsf{J}_i|.
\end{align*}
We will estimate the terms $|\mathsf{J}_i|$, $i=1,\dots,4$ separately. First, it follows from \eqref{g0'}
with $\epsilon=k$ and $\epsilon=0$ that for all $t\in(0,T]$, $\delta>0$ and $f\in\scr B_b(\R^d)$ satisfying $|f|\leq 1+|\cdot|^k$,
\begin{align*}
    |\mathsf{J}_1|&=\left|\int_{\R^d}\gg(\d x)
    \int_{\R^d}\big[q_{t}^{\nu^1}(x,y)-q_{t}^{\nu^2}(x,y)\big]f(y)\,\d y
    \right|\\
    &\leq \int_{\R^d}\gg(\d x)\int_{\R^d}\big|q_{t}^{\nu^1}(x,y)-q_{t}^{\nu^2}(x,y)\big|(1+|y|^k)\,\d y\\
    &\leq C\int_{\R^d}\gg(\d x)\int_{\R^d}\big|q_{t}^{\nu^1}(x,y)-q_{t}^{\nu^2}(x,y)\big|(1+|x|^k+|y-x|^k)\,\d y\\
    &\leq C\gamma(1+|\cdot|^k)\E\left[
    \big(S_t^{-1}+S_t^{-1+k/2}\big)\int_0^t[\W_\eta(\nu^1_r,\nu^2_r)+\W_k(\nu^1_r,\nu^2_r)]\,\d S_r
    \right]\\
    &\leq C\gamma(1+|\cdot|^k)
    \sup_{s\in[0,t]}[\W_\eta(\nu^1_s,\nu^2_s)+\W_k(\nu^1_s,\nu^2_s)]
    \times\E\left[
    \big(S_t^{-1}+S_t^{-1+k/2}\big)S_t
    \right]\\
    &\leq C\gamma(1+|\cdot|^k)\e^{\delta t}
    \sup_{s\in[0,t]}\e^{-\delta s}[\W_\eta(\nu^1_s,\nu^2_s)+\W_k(\nu^1_s,\nu^2_s)]
    \times\left(
    1+\E\big[S_T^{k/2}\big]
    \right)\\
    &\leq C\gamma(1+|\cdot|^k)\e^{\delta t}
    \sup_{s\in[0,T]}\e^{-\delta s}[\W_\eta(\nu^1_s,\nu^2_s)+\W_k(\nu^1_s,\nu^2_s)].
\end{align*}
Now we turn to $|\mathsf{J}_2|$. Note that
\begin{align*}
    |\mathsf{J}_2|
    &=\left|\int_{\R^d}\gg(\d x)\int_0^t\d r
    \int_{\R^d} \big[p_{r}^{\mu^1,\nu^1}(x,y) -p_{r}^{\mu^2,\nu^2}(x,y) \big]
    \left\<b_r(y, \mu^1_r),
    \int_{\R^d}\nabla q_{r,t}^{\nu^1}(\cdot,z)(y)f(z)\,\d z\right\>\d y\right|\\
    &\leq\int_0^t\left|
    \int_{\R^d}\gg(\d x)\int_{\R^d} \big[p_{r}^{\mu^1,\nu^1}(x,y) -p_{r}^{\mu^2,\nu^2}(x,y) \big]
    \left\<b_r(y, \mu^1_r),
    \int_{\R^d}\nabla q_{r,t}^{\nu^1}(\cdot,z)(y)f(z)\,\d z\right\>\d y
    \right|\d r.
\end{align*}
Since it follows from $(A2)$ and \eqref{g1'} with $\epsilon=k$ and $\epsilon=0$ that
\begin{align*}
    \left|\left\<b_r(y, \mu^1_r),
    \int_{\R^d}\nabla q_{r,t}^{\nu^1}(\cdot,z)(y)f(z)\,\d z\right\>\right|
    &\leq \|b\|_\infty\int_{\R^d}\big|\nabla q_{r,t}^{\nu^1}(\cdot,z)(y)\big|(1+|z|^k)\,\d z\\
    &\leq C(t-r)^{-1/\alpha}(1+|y|^k),
\end{align*}
we have for all $t\in(0,T]$, $\delta>0$ and $f\in\scr B_b(\R^d)$ satisfying $|f|\leq 1+|\cdot|^k$,
\begin{align*}
    |\mathsf{J}_2|
    &\leq C\int_0^t(t-r)^{-1/\alpha}
    \big\|\L_{{X}_{r}^{\gg,\mu^1,\nu^1}}-\L_{{X}_{r}^{\gg,\mu^2,\nu^2}}\big\|_{k,var}
    \,\d r\\
    &=C\e^{\delta t}\int_0^t\e^{-\delta r}
    \big\|\L_{{X}_{r}^{\gg,\mu^1,\nu^1}}-\L_{{X}_{r}^{\gg,\mu^2,\nu^2}}\big\|_{k,var}\cdot(t-r)^{-1/\alpha}
    \e^{-\delta (t-r)}
    \,\d r\\
    &\leq C\e^{\delta t}\sup_{s\in[0,T]}\e^{-\delta s}\big\|\L_{{X}_{s}^{\gg,\mu^1,\nu^1}}-\L_{{X}_{s}^{\gg,\mu^2,\nu^2}}\big\|_{k,var}
    \times\int_0^t
    (t-r)^{-1/\alpha}
    \e^{-\delta (t-r)}
    \,\d r\\
    &\leq C\delta^{1/\alpha-1}\e^{\delta t}\sup_{s\in[0,T]}\e^{-\delta s}\big\|\L_{{X}_{s}^{\gg,\mu^1,\nu^1}}-\L_{{X}_{s}^{\gg,\mu^2,\nu^2}}\big\|_{k,var},
\end{align*}
where in the last equality we have used the following fact
\begin{equation}\label{expint}
    \sup_{t\in[0,T]}
    \int_0^t(t-r)^{-1/\alpha}\e^{-\delta(t-r)}\,\d r
    \leq\int_0^\infty r^{-1/\alpha}\e^{-\delta r}\,\d r
    =\Gamma\left(1-\frac{1}{\alpha}\right)\delta^{1/\alpha-1}
\end{equation}
By $(A2)$, \eqref{g1'} with $\epsilon=k$ and $\epsilon=0$, \eqref{NNT} and \eqref{expint}, we derive
for all $t\in(0,T]$, $\delta>0$ and $f\in\scr B_b(\R^d)$ satisfying $|f|\leq 1+|\cdot|^k$,
\begin{align*}
    |\mathsf{J}_3|&=\left|\int_{\R^d}\gg(\d x)\int_0^t\d r
    \int_{\R^d}
    p_{r}^{\mu^2,\nu^2} (x,y)\left\<b_r(y, \mu^1_r)-b_r(y, \mu^2_r),
    \int_{\R^d}\nabla q_{r,t}^{\nu^1}(\cdot,z)(y)f(z)\,\d z\right\>\d y
    \right|\\
    &\leq\int_{\R^d}\gg(\d x)\int_0^t\d r\int_{\R^d}p_{r}^{\mu^2,\nu^2} (x,y)
    |b_r(y, \mu^1_r)-b_r(y, \mu^2_r)|\,\d y
    \int_{\R^d}\big|
    \nabla q_{r,t}^{\nu^1}(\cdot,z)(y)\big|(1+|z|^k)\,\d z\\
    &\leq C\int_{\R^d}\gg(\d x)\int_0^t(t-r)^{-1/\alpha}\left[
    \|\mu^1_r-\mu^2_r\|_{k,var}+\W_{k}(\mu^1_r,\mu^2_r)
    \right]\d r
    \int_{\R^d}p_{r}^{\mu^2,\nu^2} (x,y)(1+|y|^k)\,\d y\\
    &\leq C\int_{\R^d}(1+|x|^k)\,\gg(\d x)\int_0^t(t-r)^{-1/\alpha}\left[
    \|\mu^1_r-\mu^2_r\|_{k,var}+\W_{k}(\mu^1_r,\mu^2_r)
    \right]\d r\\
    &\leq C\gamma(1+|\cdot|^k)\e^{\delta t}\sup_{s\in[0,T]}\e^{-\delta s}\left[
    \|\mu^1_s-\mu^2_s\|_{k,var}+\W_{k}(\mu^1_s,\mu^2_s)
    \right]
    \times\int_0^t(t-r)^{-1/\alpha}\e^{-\delta(t-r)}\,\d r\\
    &\leq C\gamma(1+|\cdot|^k)\delta^{1/\alpha-1}\e^{\delta t}\sup_{s\in[0,T]}\e^{-\delta s}\left[
    \|\mu^1_s-\mu^2_s\|_{k,var}+\W_{k}(\mu^1_s,\mu^2_s)
    \right].
\end{align*}
It holds from $(A2)$, \eqref{g2'} with $\epsilon=k$ and $\epsilon=0$, and \eqref{NNT} that
for all $t\in(0,T]$, $\delta>0$ and $f\in\scr B_b(\R^d)$ satisfying $|f|\leq 1+|\cdot|^k$,
\begin{align*}
    |\mathsf{J}_4|&=\left|\int_{\R^d}\gg(\d x)\int_0^t\d r
    \int_{\R^d}p_{r}^{\mu^2,\nu^2} (x,y)
    \left\<b_r(y, \mu^2_r),
    \int_{\R^d}\big[\nabla q_{r,t}^{\nu^1}(\cdot,z)(y)-\nabla q_{r,t}^{\nu^2}(\cdot,z)(y)\big]
    f(z)\,\d z\right\>\d y
    \right|\\
    &\leq\|b\|_\infty\int_{\R^d}\gg(\d x)\int_0^t\d r\int_{\R^d}p_{r}^{\mu^2,\nu^2} (x,y)\,\d y
    \int_{\R^d}\big|\nabla q_{r,t}^{\nu^1}(\cdot,z)(y)-\nabla q_{r,t}^{\nu^2}(\cdot,z)(y)\big|
    (1+|z|^k)\,\d z\\
    &\leq C\int_{\R^d}\gg(\d x)\int_0^t\E\left[
    \left(
    (S_t-S_r)^{(k-3)/2}+(S_t-S_r)^{-3/2}
    \right)
    \int_r^t[\W_\eta(\nu^1_s,\nu^2_s)+\W_k(\nu^1_s,\nu^2_s)]\,\d S_s
    \right]\d r\\
    &\quad\times
    \int_{\R^d}p_{r}^{\mu^2,\nu^2} (x,y)(1+|y|^k)\,\d y\\
    &\leq C\int_{\R^d}(1+|x|^k)\,\gg(\d x)\\
    &\quad\times\int_0^t\E\left[
    \left(
    (S_t-S_r)^{(k-3)/2}+(S_t-S_r)^{-3/2}
    \right)
    \int_r^t\e^{-\delta s}[\W_\eta(\nu^1_s,\nu^2_s)+\W_k(\nu^1_s,\nu^2_s)]\cdot\e^{\delta s}\,\d S_s
    \right]\d r\\
    &\leq C\gamma(1+|\cdot|^k)\sup_{s\in[0,T]}\e^{-\delta s}[\W_\eta(\nu^1_s,\nu^2_s)+\W_k(\nu^1_s,\nu^2_s)]\\
    &\quad\times
    \int_0^t\E\left[
    \left(
    (S_t-S_r)^{(k-3)/2}+(S_t-S_r)^{-3/2}
    \right)
    \int_r^t\e^{\delta \tau}\,\d S_\tau
    \right]\d r.
\end{align*}
Combining the bounds for $|\mathsf{J}_i|$, $i=1,2,3,4$, we obtain for any $\delta>0$,
\begin{align*}
    &\sup_{t\in[0,T]}\e^{-\delta t}\big\|\L_{{X}_{t}^{\gg,\mu^1,\nu^1}}-\L_{{X}_{t}^{\gg,\mu^2,\nu^2}}\big\|_{k,var}
    \leq\sum_{i=1}^4\sup_{t\in(0,T]}\sup_{f\in\scr B_b(\R^d),|f|\leq 1+|\cdot|^k}
    \e^{-\delta t}|\mathsf{J}_i|\\
    &\leq C\gamma(1+|\cdot|^k)\sup_{s\in[0,T]}\e^{-\delta s}[\W_\eta(\nu^1_s,\nu^2_s)+\W_k(\nu^1_s,\nu^2_s)]\\
    &\quad+C\delta^{1/\alpha-1}
    \sup_{s\in[0,T]}\e^{-\delta s}\big\|\L_{{X}_{s}^{\gg,\mu^1,\nu^1}}-\L_{{X}_{s}^{\gg,\mu^2,\nu^2}}\big\|_{k,var}\\
    &\quad+C\gamma(1+|\cdot|^k)\delta^{1/\alpha-1}\sup_{s\in[0,T]}\e^{-\delta s}\left[
    \|\mu^1_s-\mu^2_s\|_{k,var}+\W_{k}(\mu^1_s,\mu^2_s)
    \right]\\
    &\quad+C\gamma(1+|\cdot|^k)\sup_{s\in[0,T]}\e^{-\delta s}[\W_\eta(\nu^1_s,\nu^2_s)+\W_k(\nu^1_s,\nu^2_s)]\\
    &\qquad\times
    \sup_{t\in(0,T]}\e^{-\delta t}\int_0^t\E\left[
    \left(
    (S_t-S_r)^{(k-3)/2}+(S_t-S_r)^{-3/2}
    \right)
    \int_r^t\e^{\delta \tau}\,\d S_\tau
    \right]\d r.
\end{align*}
Taking $\delta>0$ large enough and using Lemma \ref{slimit}\,ii) (with $\kappa=k/2$ and $\kappa=0$) below,
we derive the first assertion.

To prove the second inequality, we note that
\begin{align*}
    \W_\eta\big(\L_{{X}_{t}^{\gg,\mu^1,\nu^1}},\L_{{X}_{t}^{\gg,\mu^2,\nu^2}}\big)
    &=\sup_{f\in\scr B_b(\R^d),[f]_\eta\leq 1,f(0)=0}\left|
    \int_{\R^d}\big[
    P_{t}^{\mu^1,\nu^1}f(x)-P_{t}^{\mu^2,\nu^2}f(x)\big]\,\gg(\d x)\right|\\
    &\leq\sum_{i=1}^4\sup_{f\in\scr B_b(\R^d),[f]_\eta\leq 1,f(0)=0}|\mathsf{J}_i|\\
    &\leq\sup_{f\in\scr B_b(\R^d),[f]_\eta\leq 1}|\mathsf{J}_1|
    +\sum_{i=2}^4\sup_{f\in\scr B_b(\R^d),|f|\leq1+|\cdot|^k}|\mathsf{J}_i|.
\end{align*}
It follows from \eqref{g0'}
with $\epsilon=\eta$ that for all $t\in(0,T]$, $\delta>0$ and $f\in\scr B_b(\R^d)$ satisfying $[f]_\eta\leq 1$,
\begin{align*}
    |\mathsf{J}_1|&=\left|\int_{\R^d}\gg(\d x)
    \int_{\R^d}\big[q_{t}^{\nu^1}(x,y)-q_{t}^{\nu^2}(x,y)\big]\big(f(y)-f(x)\big)\,\d y
    \right|\\
    &\leq \int_{\R^d}\gg(\d x)\int_{\R^d}\big|q_{t}^{\nu^1}(x,y)-q_{t}^{\nu^2}(x,y)\big||y-x|^\eta\,\d y\\
    &\leq C\E\left[
    S_t^{-1+\eta/2}\int_0^t[\W_\eta(\nu^1_r,\nu^2_r)+\W_k(\nu^1_r,\nu^2_r)]\,\d S_r
    \right]\\
    &\leq C\sup_{s\in[0,t]}\e^{-\delta s}[\W_\eta(\nu^1_s,\nu^2_s)+\W_k(\nu^1_s,\nu^2_s)]
    \times\E\left[
    S_t^{-1+\eta/2}\int_0^t\e^{\delta r}\,\d S_r
    \right].
\end{align*}
This, together with the above bounds for $|\mathsf{J}_i|$, $i=2,3,4$, implies that for all $\delta>0$
\begin{align*}
     &\sup_{t\in[0,T]}\e^{-\delta t}\W_\eta\big(\L_{{X}_{t}^{\gg,\mu^1,\nu^1}},\L_{{X}_{t}^{\gg,\mu^2,\nu^2}}\big)\\
    &\qquad\leq\sup_{t\in(0,T]}\sup_{f\in\scr B_b(\R^d),[f]_\eta\leq 1}\e^{-\delta t}|\mathsf{J}_1|
    +\sum_{i=2}^4\sup_{t\in(0,T]}\sup_{f\in\scr B_b(\R^d),|f|\leq1+|\cdot|^k}\e^{-\delta t}|\mathsf{J}_i|\\
    &\qquad\leq C\sup_{s\in[0,T]}\e^{-\delta s}[\W_\eta(\nu^1_s,\nu^2_s)+\W_k(\nu^1_s,\nu^2_s)]
    \times\sup_{t\in(0,T]}\e^{-\delta t}\E\left[
    S_t^{-1+\eta/2}\int_0^t\e^{\delta r}\,\d S_r
    \right]\\
    &\qquad\quad+C\delta^{1/\alpha-1}
    \sup_{s\in[0,T]}\e^{-\delta s}\big\|\L_{{X}_{r}^{\gg,\mu^1,\nu^1}}-\L_{{X}_{r}^{\gg,\mu^2,\nu^2}}\big\|_{k,var}\\
    &\qquad\quad+C\gamma(1+|\cdot|^k)\delta^{1/\alpha-1}\sup_{s\in[0,T]}\e^{-\delta s}\left[
    \|\mu^1_s-\mu^2_s\|_{k,var}+\W_{k}(\mu^1_s,\mu^2_s)
    \right]\\
    &\qquad\quad+C\gamma(1+|\cdot|^k)\sup_{s\in[0,T]}\e^{-\delta s}[\W_\eta(\nu^1_s,\nu^2_s)+\W_k(\nu^1_s,\nu^2_s)]\\
    &\qquad\qquad\times
    \sup_{t\in(0,T]}\e^{-\delta t}\int_0^t\E\left[
    \left(
    (S_t-S_r)^{(k-3)/2}+(S_t-S_r)^{-3/2}
    \right)
    \int_r^t\e^{\delta \tau}\,\d S_\tau
    \right]\d r.
\end{align*}
Combining this with the upper bound for $\sup_{t\in[0,T]}\e^{-\delta t}\big\|\L_{{X}_{t}^{\gg,\mu^1,\nu^1}}-\L_{{X}_{t}^{\gg,\mu^2,\nu^2}}\big\|_{k,var}$ in the
first assertion and using Lemma \ref{slimit} below, we obtain the desired inequality and the proof
is now finished.
\end{proof}

\subsection{Proof of Proposition \ref{PW}}

\begin{proof}[Proof of Proposition \ref{PW}]
By Lemma \ref{PW0} and the second ineqiality in Lemma \ref{vdh22s},
we know that for all $\gamma\in\scr P_k$, $\mu^i,\nu^i\in C([0,T],\scr P_k)$, $i=1,2$, and $\delta>0$ large enough,
\begin{equation}\label{wke}
    \begin{aligned}
   &\sup_{t\in[0,T] }\e^{-\delta t}\big[\W_\eta(\L_{X_{t}^{\gg,\mu^1,\nu^1}},\L_{X_{t}^{\gg,\mu^2,\nu^2}})+ \W_k(\L_{X_{t}^{\gg,\mu^1,\nu^1}},\L_{X_{t}^{\gg,\mu^2,\nu^2}})\big]\\
   &\qquad\qquad\leq \sup_{t\in[0,T] }\e^{-\delta t}\W_\eta(\L_{X_{t}^{\gg,\mu^1,\nu^1}},\L_{X_{t}^{\gg,\mu^2,\nu^2}})
   +\sup_{t\in[0,T] }\e^{-\delta t}\W_k(\L_{X_{t}^{\gg,\mu^1,\nu^1}},\L_{X_{t}^{\gg,\mu^2,\nu^2}})\\
    &\qquad\qquad\le \frac{1}{2}\sup_{t\in[0,T]}\e^{-\delta t}[\W_\eta(\nu^1_t,\nu^2_t)+\W_k(\nu^1_t,\nu^2_t)]\\
   &\qquad\qquad\quad+C\gamma(1+|\cdot|^k)[\delta^{1/\alpha-1}+\delta^{-1}]
    \sup_{t\in[0,T]}\e^{-\delta t}\left[
    \|\mu^1_t-\mu^2_t\|_{k,var}+\W_{k}(\mu^1_t,\mu^2_t)\right].
\end{aligned}
\end{equation}
Taking $\mu^1=\mu^2=\mu\in C([0,T],\scr P_k)$, it holds that for $\delta>0$ large enough,
\begin{align*}
    \sup_{t\in[0,T] }\e^{-\delta t}\big[\W_\eta(\L_{X_{t}^{\gg,\mu,\nu^1}},&\L_{X_{t}^{\gg,\mu,\nu^2}})+ \W_k(\L_{X_{t}^{\gg,\mu,\nu^1}},\L_{X_{t}^{\gg,\mu,\nu^2}})\big]\\
    &\qquad\leq\frac{1}{2}\sup_{t\in[0,T]}\e^{-\delta t}[\W_\eta(\nu^1_t,\nu^2_t)+\W_k(\nu^1_t,\nu^2_t)].
\end{align*}
This means that, for $\delta>0$ large enough, the map
$$\nu\mapsto \L_{X_{\cdot}^{\gg,\mu,\nu}}$$
is strictly contractive in  $C([0,T];\scr P_k)$  under the complete metric
$$\sup_{t\in[0,T] }\e^{-\delta t}[\W_\eta(\nu^1_t,\nu^2_t)+\W_k(\nu^1_t,\nu^2_t)]$$
for $\nu^1,\nu^2\in C([0,T];\scr P_k)$.
Then it has a unique fixed point $\nu^\ast=\nu^\ast(\gg,\mu)\in  C([0,T];\scr P_k)$, i.e.\ $\nu=\nu^\ast$ is the
unique solution to the equation
$$
    \L_{X_{t}^{\gg,\mu,\nu}}=\nu_t,\quad t\in[0,T].
$$
Therefore, $X_{t}^{\gg,\mu}=X_{t}^{\gg,\mu,\nu^\ast}$
solves \eqref{ED}, and
this proves the first assertion.

Note that $X_{\cdot}^{\gg,\mu,\nu}=X_{\cdot}^{\gg,\mu}$ for
$\nu=\L_{X_{\cdot}^{\gg,\mu}}$. To prove the second claim, we take $\nu^1=\L_{X_{\cdot}^{\gg,\mu^1}}$ and $\nu^2=\L_{X_{\cdot}^{\gg,\mu^2}}$
in \eqref{wke} to get that for $\delta>0$ large enough,
\begin{align*}
    \sup_{t\in[0,T] }\e^{-\delta t}&\big[\W_\eta(\L_{X_{t}^{\gg,\mu^1}},\L_{X_{t}^{\gg,\mu^2}})+ \W_k(\L_{X_{t}^{\gg,\mu^1}},\L_{X_{t}^{\gg,\mu^2}})\big]\\
    &\le \frac{1}{2}\sup_{t\in[0,T]}\e^{-\delta t}\big[\W_\eta(\L_{X_{t}^{\gg,\mu^1}},\L_{X_{t}^{\gg,\mu^2}})
    +\W_k(\L_{X_{t}^{\gg,\mu^1}},\L_{X_{t}^{\gg,\mu^2}})\big]\\
    &\quad+C\gamma(1+|\cdot|^k)[\delta^{1/\alpha-1}+\delta^{-1}]
    \sup_{t\in[0,T]}\e^{-\delta t}\left[
    \|\mu^1_t-\mu^2_t\|_{k,var}+\W_{k}(\mu^1_t,\mu^2_t)\right],
\end{align*}
which immediately yields the desired estimate.
\end{proof}

\section{Proof of Theorem \ref{EUS}}

\begin{proof}[Proof of Theorem \ref{EUS}]
Taking $\nu^1=\L_{X_{\cdot}^{\gg,\mu^1}}$ and $\nu^2=\L_{X_{\cdot}^{\gg,\mu^2}}$ in the first inequality
in Lemma \ref{vdh22s}, and using the estimate in Proposition \ref{PW},
we obtain that for $\delta>0$ large enough,
\begin{align*}
    &\sup_{t\in[0,T]}\e^{-\delta t}\big[
    \|\L_{X_{t}^{\gg,\mu^1}}-\L_{X_{t}^{\gg,\mu^2}}\|_{k,var}+\W_{k}(\L_{X_{t}^{\gg,\mu^1}},\L_{X_{t}^{\gg,\mu^2}})\big]\\
    &\qquad\qquad\qquad\leq
    \sup_{t\in[0,T]}\e^{-\delta t}\|\L_{X_{t}^{\gg,\mu^1}}-\L_{X_{t}^{\gg,\mu^2}}\|_{k,var}
    +\sup_{t\in[0,T]}\e^{-\delta t}\W_{k}(\L_{X_{t}^{\gg,\mu^1}},\L_{X_{t}^{\gg,\mu^2}})\\
    &\qquad\qquad\qquad\leq C\gamma(1+|\cdot|^k)
    \sup_{t\in[0,T]}\e^{-\delta t}\big[
    \W_\eta(\L_{X_{t}^{\gg,\mu^1}},\L_{X_{t}^{\gg,\mu^2}})+
    \W_k(\L_{X_{t}^{\gg,\mu^1}},\L_{X_{t}^{\gg,\mu^2}})
    \big]\\
    &\qquad\qquad\qquad\quad+C\gamma(1+|\cdot|^k)
    \delta^{1/\alpha-1}\sup_{t\in[0,T]}\e^{-\delta t}
    \left[
    \|\mu^1_t-\mu^2_t\|_{k,var}+\W_{k}(\mu^1_t,\mu^2_t)
    \right]\\
    &\qquad\qquad\qquad\leq C\gamma(1+|\cdot|^k)^2\left[
    \delta^{1/\alpha-1}+\delta^{-1}
    \right]\sup_{t\in[0,T]}\e^{-\delta t}
    \left[
    \|\mu^1_t-\mu^2_t\|_{k,var}+\W_{k}(\mu^1_t,\mu^2_t)
    \right].
\end{align*}
We can now conclude that, for $\delta>0$ large enough, the map
$$
    \mu\mapsto \L_{X_{\cdot}^{\gg,\mu}}
$$
is strictly contractive in  $C([0,T];\scr P_k)$  under the complete metric
$$\sup_{t\in[0,T]}\e^{-\delta t}\left[
\|\mu^1_t-\mu^2_t\|_{k,var}+\W_{k}(\mu^1_t,\mu^2_t)\right]$$
for $\mu^1,\mu^2\in C([0,T];\scr P_k)$.
Then it has a unique fixed point $\mu^\ast=\mu^\ast(\gg)\in  C([0,T];\scr P_k)$
such that $\mu^\ast=\L_{X_{\cdot}^{\gg,\mu^\ast}}$, and $X_t=X_{t}^{\gg,\mu^\ast}$ is the unique solution
to \eqref{E1} with $\L_{X_0}=\gamma\in\scr P_k$.

Recall that
$$
    X_t=X_0+\int_0^tb_r(X_r,\L_{X_r})\,\d r+\int_0^t\sigma_r(\L_{X_r})\,\d W_{S_r},
$$
where $\L_{X_0}\in\scr P_k$. Since the coefficients $b$ and $\sigma$ are bounded, as in \eqref{gr4e23} it
is easy to get
$$
    \E\left[\sup_{t\in[0,T]}|X_t|^k\right]
    \leq C\E\big[|X_0|^k\big]+C.
$$
This completes the proof.
\end{proof}

\section{Appendix}\label{app}

\begin{lem}\label{slimit}
    Let $T>0$ and $S_t$ be an $\frac{\alpha}{2}$-stable subordinator \textup{(}$0<\alpha<2$\textup{)}.

  \smallskip\noindent\textup{i)}
        If $0<\kappa<\alpha/2$, then
       $$
        \lim_{\delta\rightarrow\infty}\sup_{t\in(0,T]}\e^{-\delta t}\E\left[
        S_t^{\kappa-1}\int_0^t\e^{\delta r}\,\d S_r
        \right]=0.
       $$

    \smallskip\noindent\textup{ii)}
        If $(1-\alpha)/2<\kappa<(1+\alpha)/2$, then
       $$
        \lim_{\delta\rightarrow\infty}\sup_{t\in(0,T]}\e^{-\delta t}
        \int_0^t \E\left[(S_t-S_r)^{\kappa-3/2}
        \int_r^t\e^{\delta \tau}\,\d S_\tau
        \right]\d r
        =0.
       $$
\end{lem}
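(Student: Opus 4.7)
The plan is to reduce both parts to a single auxiliary estimate on the function
\[h_\mu(\tau) := \e^{-\tau}\,\E\biggl[S_1^\mu\int_0^1\e^{\tau s}\,\d S_s\biggr],\quad \tau\ge 0,\]
defined for $\mu\in\R$ with $\mu+1<\alpha/2$. I will show $h_\mu$ is bounded on $[0,\infty)$ and $h_\mu(\tau)\to 0$ as $\tau\to\infty$. Integrating by parts gives $\int_0^1\e^{\tau s}\,\d S_s = \e^\tau S_1-\tau\int_0^1\e^{\tau s}S_s\,\d s$; multiplying by $\e^{-\tau}S_1^\mu$, applying Fubini (which is valid since $S_s S_1^\mu\le S_1^{\mu+1}$ and $\E[S_1^{\mu+1}]<\infty$ by $\mu+1<\alpha/2$, noting that negative moments of $S_1$ are finite), and substituting $u=\tau(1-s)$ yields
\[h_\mu(\tau)=\E\bigl[S_1^{\mu+1}\bigr]-\int_0^\tau\e^{-u}\,\E\bigl[S_{1-u/\tau}\,S_1^\mu\bigr]\,\d u.\]
The same pathwise bound $S_{1-u/\tau}S_1^\mu\le S_1^{\mu+1}$ furnishes an integrable majorant $\e^{-u}\E[S_1^{\mu+1}]$; right-continuity of $S$ at $1$ (no fixed-time jump, a.s.) gives $\E[S_{1-u/\tau}S_1^\mu]\to\E[S_1^{\mu+1}]$ for each $u$, so dominated convergence delivers $h_\mu(\tau)\to 0$. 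Boundedness $0\le h_\mu(\tau)\le\E[S_1^{\mu+1}]$ is read off directly.

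For part (i), the $\alpha/2$-self-similarity $(S_{rs})_{s\in[0,1]}\stackrel{d}{=}(r^{2/\alpha}S_s)_{s\in[0,1]}$ immediately identifies
\[\e^{-\delta t}\,\E\biggl[S_t^{\kappa-1}\int_0^t\e^{\delta r}\,\d S_r\biggr] = t^{2\kappa/\alpha}\,h_{\kappa-1}(\delta t),\]
and the hypothesis $\kappa<\alpha/2$ is exactly $(\kappa-1)+1<\alpha/2$. To handle the sup over $t\in(0,T]$, given $\vv>0$ I pick $T_0$ with $h_{\kappa-1}(\tau)<\vv/T^{2\kappa/\alpha}$ for $\tau>T_0$: the piece $t\ge T_0/\delta$ contributes at most $\vv$, while the piece $t<T_0/\delta$ is bounded by $\|h_{\kappa-1}\|_\infty(T_0/\delta)^{2\kappa/\alpha}\to 0$ as $\delta\to\infty$.

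For part (ii), I first use stationary independent increments (let $\tilde S_u := S_{r+u}-S_r\stackrel{d}{=}S_u$) to factor
\[\E\biggl[(S_t-S_r)^{\kappa-3/2}\int_r^t\e^{\delta\tau}\,\d S_\tau\biggr] = \e^{\delta r}\,\E\biggl[S_{t-r}^{\kappa-3/2}\int_0^{t-r}\e^{\delta u}\,\d S_u\biggr].\]
The $\e^{\delta r}$ factor cancels $\e^{-\delta t}$ after substituting $v=t-r$, and positivity of the resulting integrand allows me to replace $\sup_{t\in(0,T]}$ by evaluation at $t=T$. Applying self-similarity as in (i) reduces the problem to
\[\int_0^T v^{2(\kappa-1/2)/\alpha}\,h_{\kappa-3/2}(\delta v)\,\d v\;\longrightarrow\;0\quad\text{as }\delta\to\infty.\]
The upper constraint $\kappa<(1+\alpha)/2$ is precisely $(\kappa-3/2)+1<\alpha/2$, activating the auxiliary estimate, and the lower constraint $\kappa>(1-\alpha)/2$ is exactly what makes $v\mapsto v^{2(\kappa-1/2)/\alpha}$ integrable near $0$. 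With the majorant $\|h_{\kappa-3/2}\|_\infty v^{2(\kappa-1/2)/\alpha}$ and the pointwise vanishing $h_{\kappa-3/2}(\delta v)\to 0$ for $v>0$, dominated convergence closes the argument. The principal obstacle is the uniformity-in-$t$ step in (i): the pointwise decay of $h_{\kappa-1}$ alone is not enough, and must be paired against the vanishing prefactor $t^{2\kappa/\alpha}$ via the split described above; every other step is routine once the auxiliary lemma on $h_\mu$ is in hand.
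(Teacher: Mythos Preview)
Your proof is correct and takes a genuinely different route from the paper. The paper proceeds by a direct $\epsilon$-splitting: for (i) it breaks $\int_0^t \e^{\delta r}\,\d S_r$ at $(1-\epsilon)t$ and uses only monotonicity of $S$ and the moment scaling $\E[S_t^\kappa]=t^{2\kappa/\alpha}\E[S_1^\kappa]$, then sends $\delta\to\infty$ followed by $\epsilon\downarrow 0$; for (ii) it repeats the splitting at $t-\epsilon(t-r)$ and introduces an auxiliary exponent $\theta\in(1-\alpha/2,\,1\wedge(3/2-\kappa))$ to separate the two increments before invoking independence. Your argument instead exploits the full self-similarity of the process to collapse everything to the single function $h_\mu(\tau)=\e^{-\tau}\E[S_1^\mu\int_0^1\e^{\tau s}\,\d S_s]$, proves its decay once via integration by parts and dominated convergence, and then reads off both parts as immediate consequences. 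What your approach buys is unification and a cleaner mechanism (no $\epsilon$-limit, no auxiliary $\theta$); what the paper's approach buys is that each step is an elementary pathwise bound requiring no integration-by-parts identity or careful DCT. One minor wording point: when you write ``right-continuity of $S$ at $1$'' you actually need $S_{1-u/\tau}\to S_1$ from the left, but your parenthetical ``no fixed-time jump, a.s.'' is the correct justification, so this is cosmetic.
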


\begin{proof}
    i) Since $r\mapsto S_r$ is nondecreasing, we have for any $t>0$, $\delta>0$ and $\epsilon\in(0,1)$,
    \begin{align*}
        S_t^{\kappa-1}\int_0^t\e^{\delta r}\,\d S_r
        &=S_t^{\kappa-1}\left(
        \int_0^{(1-\epsilon)t}+\int_{(1-\epsilon)t}^t
        \right)\e^{\delta r}\,\d S_r\\
        &\leq \e^{\delta(1-\epsilon)t}S_t^{\kappa-1}S_{(1-\epsilon)t}
        +\e^{\delta t}S_t^{\kappa-1}\left(S_t-S_{(1-\epsilon)t}\right)\\
        &\leq \e^{\delta(1-\epsilon)t}S_t^\kappa+\e^{\delta t}\left(S_t-S_{(1-\epsilon)t}\right)^\kappa.
    \end{align*}
    Since a subordinator has stationary increments, it follows that
    \begin{align*}
        \e^{-\delta t}\E\left[
        S_t^{\kappa-1}\int_0^t\e^{\delta r}\,\d S_r
        \right]
        &\leq\e^{-\delta\epsilon t}\E\left[S_t^\kappa\right]
        +\E\left[\left(S_t-S_{(1-\epsilon)t}\right)^\kappa\right]\\
        &=\e^{-\delta\epsilon t}\E\left[S_t^\kappa\right]
        +\E\left[S_{\epsilon t}^\kappa\right]\\
        &=\e^{-\delta\epsilon t}t^{2\kappa/\alpha}
        \E\left[S_1^\kappa\right]
        +(\epsilon t)^{2\kappa/\alpha}
        \E\left[S_1^\kappa\right],
    \end{align*}
    which implies that for any $\delta>0$ and $\epsilon\in(0,1)$,
    \begin{align*}
        \sup_{t\in(0,T]}\e^{-\delta t}\E\left[
        S_t^{\kappa-1}\int_0^t\e^{\delta r}\,\d S_r
        \right]
        &\leq\E\left[S_1^\kappa\right]\left(
        \sup_{t\in(0,T]}\e^{-\delta\epsilon t}t^{2\kappa/\alpha}
        +\sup_{t\in(0,T]}(\epsilon t)^{2\kappa/\alpha}
        \right)\\
        &\leq\E\left[S_1^\kappa\right]\left(
        \left(
        \frac{2\kappa}{\alpha\e\epsilon\delta}
        \right)^{2\kappa/\alpha}
        +(\epsilon T)^{2\kappa/\alpha}
        \right).
    \end{align*}
    By letting first $\delta\rightarrow\infty$ then $\epsilon\downarrow0$, we get the desired claim.

    \medskip

    \noindent
    ii) Pick $\theta\in\R$ such that
    $$
        1-\frac\alpha2<\theta<1\wedge\left(\frac32-\kappa\right).
    $$
    Since $r\mapsto S_r$ is nondecreasing, for any $0\leq r< t$, $\delta>0$ and $\epsilon\in(0,1)$,
    \begin{align*}
        &(S_t-S_r)^{\kappa-3/2}\int_r^t\e^{\delta \tau}\,\d S_\tau
        =(S_t-S_r)^{\kappa-3/2}\left(
        \int_r^{t-\epsilon(t-r)}+\int_{t-\epsilon(t-r)}^t
        \right)\e^{\delta \tau}\,\d S_\tau\\
        &\leq\e^{\delta\left(t-\epsilon(t-r)\right)}
        (S_t-S_r)^{\kappa-3/2}
        \left(S_{t-\epsilon(t-r)}-S_r\right)
        +\e^{\delta t}(S_t-S_r)^{\theta+\kappa-3/2}
        (S_t-S_r)^{-\theta}
        \left(
        S_t-S_{t-\epsilon(t-r)}
        \right)\\
        &\leq\e^{\delta\left(t-\epsilon(t-r)\right)}
        (S_t-S_r)^{\kappa-1/2}
        +\e^{\delta t}\left(S_{t-\epsilon(t-r)}-S_r\right)^{\theta+\kappa-3/2}
        \left(
        S_t-S_{t-\epsilon(t-r)}
        \right)^{1-\theta}.
    \end{align*}
    Because of the independent and stationary increments property
    of a subordinator, we obtain
    \begin{align*}
        &\E\left[(S_t-S_r)^{\kappa-3/2}
        \int_r^t\e^{\delta \tau}\,\d S_\tau
        \right]\\
        &\qquad\leq\e^{\delta\left(t-\epsilon(t-r)\right)}
        \E\left[
        \left(S_t-S_r\right)^{\kappa-1/2}
        \right]
        +\e^{\delta t}\E\left[
        \left(S_{t-\epsilon(t-r)}-S_r\right)^{\theta+\kappa-3/2}
        \right]
        \E\left[
        \left(
        S_t-S_{t-\epsilon(t-r)}
        \right)^{1-\theta}
        \right]\\
        &\qquad=\e^{\delta\left(t-\epsilon(t-r)\right)}
        \E\left[
        S_{t-r}^{\kappa-1/2}
        \right]
        +\e^{\delta t}\E\left[
        S_{(1-\epsilon)(t-r)}
        ^{\theta+\kappa-3/2}
        \right]
        \E\left[
        S_{\epsilon(t-r)}^{1-\theta}
        \right]\\
        &\qquad=\e^{\delta\left(t-\epsilon(t-r)\right)}
        (t-r)^{(2\kappa-1)/\alpha}
        \E\left[
        S_1^{\kappa-1/2}
        \right]\\
        &\qquad\quad+\e^{\delta t}
        \epsilon^{2(1-\theta)/\alpha}(1-\epsilon)^{(2\theta+2\kappa-3)/\alpha}
        (t-r)^{(2\kappa-1)/\alpha}
        \E\left[
        S_1^{\theta+\kappa-3/2}
        \right]
        \E\left[S_1^{1-\theta}
        \right].
    \end{align*}
    This yields that for any $\delta>0$ and $\epsilon\in(0,1)$,
    \begin{align*}
        &\sup_{t\in(0,T]}\e^{-\delta t}
        \int_0^t \E\left[(S_t-S_r)^{\kappa-3/2}
        \int_r^t\e^{\delta \tau}\,\d S_\tau
        \right]\d r\\
        &\quad\qquad\qquad\leq\E\left[
        S_1^{\kappa-1/2}
        \right]\sup_{t\in(0,T]}\int_0^t
        \e^{-\delta\epsilon(t-r)}
        (t-r)^{(2\kappa-1)/\alpha}\,\d r\\
        &\quad\qquad\qquad\quad+\epsilon^{2(1-\theta)/\alpha}(1-\epsilon)^{(2\theta+2\kappa-3)/\alpha}
        \E\left[
        S_1^{\theta+\kappa-3/2}
        \right]
        \E\left[S_1^{1-\theta}
        \right]\sup_{t\in(0,T]}\int_0^t
        (t-r)^{(2\kappa-1)/\alpha}
        \,\d r\\
        &\quad\qquad\qquad=\E\left[
        S_1^{\kappa-1/2}
        \right]\int_0^T
        \e^{-\delta\epsilon r}
        r^{(2\kappa-1)/\alpha}\,\d r\\
        &\quad\qquad\qquad\quad+\epsilon^{2(1-\theta)/\alpha}(1-\epsilon)^{(2\theta+2\kappa-3)/\alpha}
        \E\left[
        S_1^{\theta+\kappa-3/2}
        \right]
        \E\left[S_1^{1-\theta}
        \right]
        \int_0^Tr^{(2\kappa-1)/\alpha}
        \,\d r.
    \end{align*}
    It remains to let first $\delta\rightarrow\infty$ then $\epsilon\downarrow0$
    to finish the proof.
\end{proof}

\noindent
\textbf{Acknowledgement.} C.-S.\ Deng is supported by
Natural Science Foundation of Hubei Province of China (2022CFB129).
X.\ Huang is supported by National Natural Science Foundation of China (12271398).


\begin{thebibliography}{99}


\bibitem{CF} P.-E.\ Chaudru de Raynal, N.\ Frikha, \emph{Well-posedness for some non-linear SDEs
and related PDE on the Wasserstein space}, J. Math. Pures Appl. 159 (2022), 1--167.


\bibitem{Chen04} M.-F.\ Chen, \emph{From Markov Chains to Non-Equilibrium Particle Systems}.
Word Scientific, Singapore 2004 (2nd edn).



\bibitem{DS19}
C.-S.\ Deng, R.L.\ Schilling, \emph{Exact Asymptotic formulas for the heat kernels of space and time-fractional
equations}, Fractional Calculus $\&$ Applied Analysis 22 (2019), 968--989.

\bibitem{HW19} X.\ Huang, F.-Y.\ Wang, \emph{Distribution dependent SDEs with singular coefficients},
Stochastic Process. Appl. 129 (2019), 4747--4770.

\bibitem{HWJMAA}X.\ Huang, F.-Y.\ Wang, \emph{Singular McKean-Vlasov (reflecting) SDEs with distribution dependent noise},
J. Math. Anal. Appl. 514 (2022), no. 1, Paper No. 126301, 21 pp.

\bibitem{HY} X.\ Huang, F.-F.\ Yang, \emph{Distribution dependent SDEs with H\"{o}lder continuous drift and $\alpha$-stable noise},
Numer. Algorithms 86 (2021), 813--831.

\bibitem{JMW} B.\ Jourdain, S.\ M\'{e}l\'{e}ard, W. A.\ Woyczynski, \emph{Nonlinear SDEs driven by L\'{e}vy processes
    and related PDEs}, ALEA Lat. Am. J. Probab. Math. Stat. 4 (2008), 1--29.

\bibitem{KS} F.\ K\"{u}hn, R.L.\ Schilling, \emph{Strong convergence of the Euler--Maruyama approximation
for a class of L\'{e}vy-driven SDEs}, Stochastic Process. Appl. 129 (2019) 2654--2680.

\bibitem{McKean} H.P.\ McKean\ Jr.,   \emph{A class of Markov processes associated with nonlinear parabolic equations},
Proc. Nat. Acad. Sci. U.S.A. 56 (1966), 1907--1911.

\bibitem{Nov} A.A.\ Novikov, \emph{On discontinuous martingales},  Theory Probab. Appl. 20 (1975) 11--26.

\bibitem{P} E.\ Priola, \emph{Pathwise uniqueness for singular SDES driven by stable processes},
Osaka J. Math. 49 (2012), 421--447.

\bibitem{RZ} M.\ R\"ockner, X.\ Zhang, \emph{Well-posedness of distribution dependent SDEs with singular drifts},
Bernoulli 27 (2021) 1131--1158.

\bibitem{SSZ12}
R.L.\ Schilling, R.\ Song and Z.\ Vondracek,
\emph{Bernstein Functions: Theory and Applications}. De Gruyter, Studies in Mathematics \textbf{37}, Berlin 2012 (2nd edn).


\bibitem{W21a} F.-Y.\ Wang, \emph{Distribution dependent reflecting stochastic differential equations}, arXiv:2106.12737v5.

\bibitem{ZG} G.\ Zhao, \emph{On distribution dependent SDEs with singular drifts}, arXiv:2003.04829v3.

\end{thebibliography}
\end{document}